\documentclass[11pt]{article}


\usepackage[margin=1in]{geometry}  
\usepackage{amsmath}               
\usepackage{amsfonts}              
\usepackage{amsthm}                


\newtheorem{theor}{Theorem}[section]
\newtheorem{lmm}[theor]{Lemma}
\newtheorem{prop}[theor]{Proposition}
\newtheorem{cor}[theor]{Corollary}

\newtheorem{defin}{Definition}
\newcommand{\ve}{\varepsilon}

\newcommand{\R}{\mathbb{R}}
\newcommand{\E}{\mathbf{E}}

\newcommand{\pr}{\mathbf{P}}

\providecommand{\keywords}[1]{\textit{Keywords:} #1}
\providecommand{\msc}[1]{\textit{AMS MSC 2010:} #1}

\begin{document}

\nocite{*}

\title{Probability distributions of extremes of self-similar Gaussian random fields}

\author{Yuriy  Kozachenko \\
Department of Probability Theory, Statistics and Actuarial
Mathematics \\
Taras Shevchenko National University of Kyiv \\
Volodymyrska 64, Kyiv 01601, Ukraine , E-mail:yvk@univ.kiev.uk
\and Vitalii  Makogin \\
Department of Probability Theory, Statistics and Actuarial
Mathematics \\
Taras Shevchenko National University of Kyiv \\
Volodymyrska 64, Kyiv 01601, Ukraine, E-mail:makoginv@ukr.net}


\maketitle

\begin{abstract}
We have obtained some upper bounds for the probability distribution of extremes of a self-similar Gaussian random field with stationary rectangular increments that are defined  on the compact spaces.  The probability distributions of extremes for the normalized self-similar Gaussian random fields with stationary rectangular increments defined in $\R^2_+$ have been presented.  In our work we have used the techniques developed for the self-similar fields and based on the classical series analysis of the maximal probability bounding from below for the Gaussian fields.
\end{abstract}

\keywords{distribution of extremes, self-similar random field, finite dimensional distributions, fractional Brownian sheet.}

\msc{60E05, 60E15.}

\section{Introduction}

A self-similar process is a stochastic process that is invariant in
distribution under the suitable scaling of time and space. A random
process $\{X(t),t \in \R\}$ is self-similar with index $H
> 0$ if for all $a > 0$
$\{X(t),t\in \R\}\stackrel{d}{=}\{a^{-H}X(at), t \in \R\},$ where $\stackrel{d}{=}$ denotes equality of the finite-dimensional
distributions. We refer to  Embrechts and Maejima \cite{maej} and Samorodnitsky and Taqqu
\cite{Sam}
for the extensive surveys on results and techniques for self-similar processes.

In this paper we consider the self-similar random fields that are an extension of the self-similar stochastic processes. More precisely, we deal with anisotropic self-similar random fields which means that their indexes of self-similarity are different for different coordinates. We denote $\R_+=[0,+\infty).$
\begin{defin}
\label{def_taq}
A real valued random field $\{X (\mathbf{t}), \mathbf{t} = (t_1, \dots , t_n) \in \R_+^n\}$ is
self-similar with index $\mathbf{H} = (H_1, \ldots ,H_n) \in (0,+\infty)^n$
if \[\left\{X (a_1t_1, \ldots , a_nt_n), \ \ \mathbf{t} \in \R_+^n\right\} \stackrel{d}{=}\left\{a_1^{H_1}\cdots a_n^{H_n} X (\mathbf{t}), \ \ \mathbf{t} \in \R_+^n \right\},\] for all $a_1 > 0, \ldots , a_n > 0.$
\end{defin}

An interest to the anisotropic self-similar random fields is motivated by the applications
coming from the climatological and environmental sciences (see
\cite{env1,env2}). Several authors have proposed to apply such
random fields for modelling phenomena in spatial statistics,
stochastic hydrology and image processing (see
\cite{env3,env4,env5}).

\begin{defin}
\label{fbs} The normalized fractional Brownian sheet with
Hurst index $\mathbf{H}=(H_1,\ldots,H_n), 0<H_i<1, i=\overline{1,n}$
is the centered Gaussian random field
$B_\mathbf{H}=\{B_\mathbf{H}(\mathbf{t}),\mathbf{t} \in \R_+^n\}$
with a covariance function
\[\E(B_\mathbf{H}(\mathbf{t})B_\mathbf{H} (\mathbf{s})) =
2^{-n}\prod_{i=1}^{n}
\left(|t_i|^{2H_i}+|s_i|^{2H_i}-|t_i-s_i|^{2H_i}\right),\quad
\mathbf{t},\mathbf{s} \in \R_+^n. \]
\end{defin}
This field is  self-similar with index $\mathbf{H}=(H_1,\ldots,H_n)$ by Definition~\ref{def_taq}.

Further in the paper, we  assume that the fields satisfy the
Definition~\ref{def_taq}.
Moreover, we shall consider only the case $n=2$ since switching to the parameter of the higher
dimension is rather technical.

Denote $\mathbf{0}=(0,0).$

\begin{defin}
Let $X=\{X(\mathbf{t}),\mathbf{t}\in \R_+^2\}$ be a self-similar field with index $\mathbf{H}=(H_1,H_2)\in (0,+\infty)^2.$
For any $\mathbf{u}=(u_1,u_2) \in \R_+^2$ and any $\mathbf{v}=(v_1,v_2)\in \R_+^2$ such that $v_1>u_1,$ $v_2>u_2$ define
\[ \Delta_{\mathbf{u}} X(\mathbf{v}) =  X(v_1,v_2)-X(u_1,v_2)-X(v_1,u_2)+X(u_1,u_2).
\]
The field $X$ admits stationary rectangular increments if for
any $\mathbf{u}=(u_1,u_1) \in \R_+^2$
\[\{\Delta_{\mathbf{u}} X(\mathbf{u}+\mathbf{h}),\mathbf{h}\in \R^2_+\}\stackrel{d}{=}\{\Delta_{\mathbf{0}} X(\mathbf{h}),\mathbf{h}\in \R^2_+\}.\]
\end{defin}

The fractional Brownian sheet has the stationary rectangular increments. The proof of this property for
the $\R_+^2$ case can be found in the paper \cite{alp}. A similar
property for the case  $n>2$ can be easily proved as well.

The aim of the paper is to obtain the upper bound for probability distributions of extremes of normalized self-similar Gaussian random fields with the stationary rectangular increments. These probabilities can be used for the estimation of asymptotic growth of sample paths of the fractional Brownian sheet. Furthermore, these probabilities can be applied to investigate the asymptotic behavior of the fractional derivative of the fractional Brownian motion, which is used in the analysis of a non-standard maximum
likelihood estimate for the unknown drift parameter in the stochastic differential equations driven by fractional Brownian motion (see Kozachenko et al. \cite{koz_mish}).

To achieve this goal we use the results from the theory of extremes for the Gaussian processes (Kozachenko et al. \cite{VasKozYamENG}). This theory, in turn, is based on the theory of metric spaces. To apply these results we need to choose the appropriate compact metric space $(\mathbf{T},\rho)$ and to estimate the variance of the increments.  Since we work with anisotropic field we expect that the chosen metric has the different geometric characteristics along different directions. So, we use two metrics $\rho_1(\mathbf{t},\mathbf{s})=\max_{i=1,2} |t_i-s_i|,\mathbf{t},\mathbf{s}\in \mathbf{T}\subset \R^+_2$ 
and $\rho_2(\mathbf{t},\mathbf{s})=\sum_{i=1,2} |t_i-s_i|^{H_i},\mathbf{t},\mathbf{s}\in \mathbf{T}\subset \R^+_2,$ where $\mathbf{H}=(H_1,H_2)\in(0,1)^2$ is the index of self-similarity of the corresponding random field. The second metric 
has played an important role in the studying  the anisotropic Gaussian fields and the self-similar random fields (see \cite{xiao}).



The main point in the proofs of this paper is the self-similar property of the fields. This yields the similar behavior of sample paths on compact subsets. From the theory of extremes for the Gaussian processes we get the upper bounds for the probabilities defined in the compact sets. Whence we expand $\R^2_+$ into the union of the compact subsets and apply the inequalities for probabilities   in each subset.
We use the techniques of the self-similar fields  based on the classical series analysis for finding the bound from below of the maximal probabilities for the Gaussian fields. Several results in this paper are obtained by the optimization procedure.

The paper is organized as follows. In Section 2, we present the
probability distributions of extremes of the Gaussian fields defined on the compact spaces and a bound for the variance of its increments in the case of self-similar field. In Section 3 we establish the probability distributions of extremes of the fields defined  on compact metric space $(\mathbf{T},\rho_1)$ and derive the upper bounds for such probabilities of the normalized field defined  on $\R^2_+.$ In Section 4 we obtain the probability distribution of extremes of the normalized self-similar Gaussian field defined on the metric space $(\mathbf{T},\rho_2).$

\section{Probability distributions of extremes of a Gaussian field defined on a compact space}
Let $(\Omega,\mathtt{F}, \pr)$ be a complete probability space satisfying the standard assumptions. It is assumed that
all processes under consideration are defined on this space.

The next theorem follows from Theorem 2.8 of \cite{VasKozYamUKR} or it could
be obtained form Lemma 3.2 of \cite{VasKozYamENG}.
\begin{theor}
\label{main_thrm} Let $(\mathbf{T},\rho)$ be a metric compact space
and $X=\{X(\mathbf{t}),\mathbf{t}\in \mathbf{T})\}$ be a separable centered Gaussian
process. Suppose there exists such a continuous monotonically increasing
function $\sigma: \R_+\rightarrow (0,+\infty), \sigma(0)=0$  that the
following inequality holds
\begin{equation}
\label{cond_sigm} \sup_{\rho(\mathbf{t},\mathbf{s})\le
h}{\left(\E(X(\mathbf{t})-X(\mathbf{s}))^2\right)^{1/2}} \le\sigma(h).
\end{equation}
Let \begin{equation}
\label{defin_beta_gama}
\beta=\sigma\left(\inf_{\mathbf{s}\in \mathbf{T}}\sup_{\mathbf{t}\in
\mathbf{T}}\rho(\mathbf{t},\mathbf{s})\right),~~\gamma=\sup_{\mathbf{u}\in
\mathbf{T}}{\left(\E[X^2(\mathbf{u})]\right)^{1/2}}.
\end{equation}
We denote as $N(\ve)$  
the minimal number of closed $\rho$-balls with radius $\ve$ needed to cover the space
$(\mathbf{T},\rho)$. Let $r: [1,+\infty)\rightarrow (0,+\infty)$ be such a continuous function that a function $r(e^t),t>0$ is convex.
If
\begin{equation*}
\label{cond_rn} \int_0^{+\infty}r(N(\sigma^{(-1)}(u)))du<\infty,
\end{equation*}
then for all $\lambda>0,0<p<1,\ve>0$
\begin{equation}
\label{i_e} \begin{gathered}
I_\mathbf{T}(\ve):=\pr\left\{\sup_{\mathbf{t} \in
\mathbf{T}}{|X(\mathbf{t})|>\ve}\right\}\le2
\exp{\left\{\frac12\frac{\lambda^2\gamma^2}{1-p}
+p\frac{\lambda^2\beta^2}{2(1-p)^2}-\lambda\ve\right\}\times}\\
\times r^{(-1)}\left(\frac{1}{\beta p} \int_0^{\beta p}r\left(N(\sigma^{(-1)}(u))\right)du\right).
\end{gathered}
\end{equation}

\end{theor}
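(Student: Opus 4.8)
The plan is to reduce the two-sided supremum to a one-sided one and then to estimate an exponential moment of $\sup_{\mathbf{t}}X(\mathbf{t})$ by entropy chaining, with the function $r$ entering through a single convexity step. Since $X$ is centered Gaussian, $X$ and $-X$ have the same law, whence $\pr\{\sup_{\mathbf{t}}|X(\mathbf{t})|>\ve\}\le 2\,\pr\{\sup_{\mathbf{t}}X(\mathbf{t})>\ve\}$; this accounts for the prefactor $2$. For fixed $\lambda>0$ the exponential Chebyshev inequality gives $\pr\{\sup_{\mathbf{t}}X(\mathbf{t})>\ve\}\le e^{-\lambda\ve}\,\E\exp\{\lambda\sup_{\mathbf{t}}X(\mathbf{t})\}$, so the whole problem becomes the estimation of this exponential moment, from which the factor $e^{-\lambda\ve}$ will supply the $-\lambda\ve$ in the exponent of \eqref{i_e}.

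First I would fix a near-central point $\mathbf{s}_0\in\mathbf{T}$ almost realising $\inf_{\mathbf{s}}\sup_{\mathbf{t}}\rho(\mathbf{t},\mathbf{s})$ and write $\sup_{\mathbf{t}}X(\mathbf{t})=X(\mathbf{s}_0)+\sup_{\mathbf{t}}(X(\mathbf{t})-X(\mathbf{s}_0))$, the first summand being a single random variable. Applying H\"older's inequality with conjugate exponents $1/(1-p)$ and $1/p$ separates the recentred value $X(\mathbf{s}_0)$ from the oscillation $Y(\mathbf{t}):=X(\mathbf{t})-X(\mathbf{s}_0)$. Since $X(\mathbf{s}_0)$ is centered Gaussian with variance at most $\gamma^2$, the corresponding factor evaluates to $\bigl(\exp\{\tfrac12\lambda^2\gamma^2/(1-p)^2\}\bigr)^{1-p}=\exp\{\tfrac12\lambda^2\gamma^2/(1-p)\}$, which is exactly the first term in the exponent of \eqref{i_e}. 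The remaining factor reduces everything to bounding $\E\exp\{(\lambda/p)\sup_{\mathbf{t}}Y(\mathbf{t})\}$, where $Y(\mathbf{s}_0)=0$ and, by \eqref{cond_sigm} together with $\rho(\mathbf{t},\mathbf{s}_0)\le\sigma^{(-1)}(\beta)$, one has $\sup_{\mathbf{t}}(\E Y^2(\mathbf{t}))^{1/2}\le\beta$.

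The core of the argument is the entropy estimate for $\E\exp\{(\lambda/p)\sup Y\}$, and here I would invoke the chaining scheme underlying Theorem~2.8 of \cite{VasKozYamUKR} (equivalently Lemma~3.2 of \cite{VasKozYamENG}). Choosing a nested sequence of minimal $\ve_k$-nets and telescoping $Y$ along nearest-point projections expresses $\sup Y$ as a sum of maxima of Gaussian increments whose standard deviations are governed by $\sigma(\ve_k)$ and whose multiplicities are governed by the covering numbers $N(\ve_k)$. Summing the Gaussian Laplace transforms of these increments and passing to the continuous scale via the substitution $u=\sigma(\ve)$, so that $\ve=\sigma^{(-1)}(u)$, produces the quantity $\int_0^{\beta p}r(N(\sigma^{(-1)}(u)))\,du$; the convexity of $r(e^t)$ is exactly what licenses the Jensen step converting the discrete product of transforms into the single factor $r^{(-1)}\!\left(\frac{1}{\beta p}\int_0^{\beta p}r(N(\sigma^{(-1)}(u)))\,du\right)$, the averaging normalisation $1/(\beta p)$ and the upper limit $\beta p$ arising together from the H\"older rescaling by $1/p$. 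The aggregated increment variances, all controlled by $\beta$, contribute the Gaussian factor $\exp\{p\lambda^2\beta^2/(2(1-p)^2)\}$, the second exponential term of \eqref{i_e}; the assumed convergence of the entropy integral guarantees that the telescoping series converges and that $r^{(-1)}$ is applied to a finite value.

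I expect the delicate point to be precisely the bookkeeping in this last step: choosing the geometric ratio of the scales $\ve_k$ and distributing the free exponent across levels so that, after optimisation, the oscillation bound carries the exact coefficient $p/(1-p)^2$ on $\beta^2$ and the entropy contribution collapses to a single clean $r^{(-1)}(\cdots)$ rather than an unwieldy product over levels. The roles of $\beta$ and $\gamma$ must be kept strictly separate throughout---$\gamma$ bounding the recentred value $X(\mathbf{s}_0)$ through the $(1-p)$-factor of H\"older, and $\beta$ bounding the chaining increments of the oscillation $Y$---since otherwise the two exponential terms in \eqref{i_e} would merge incorrectly. Collecting the three contributions to the exponent and the prefactor $2$ yields inequality \eqref{i_e} for every $\lambda>0$, $0<p<1$, $\ve>0$.
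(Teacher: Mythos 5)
A preliminary remark on the comparison baseline: the paper contains no proof of Theorem \ref{main_thrm} at all --- it is imported verbatim, the text pointing to Theorem 2.8 of \cite{VasKozYamUKR} or Lemma 3.2 of \cite{VasKozYamENG}. Your outer scaffolding is sound and matches the known architecture of those results: symmetry of the centered Gaussian law gives the prefactor $2$; the exponential Chebyshev inequality gives $e^{-\lambda\ve}$; bounding $\E\exp\{\lambda X(\mathbf{s}_0)/(1-p)\}$ for a near-Chebyshev-center $\mathbf{s}_0$ and raising to the power $1-p$ gives precisely $\exp\{\lambda^2\gamma^2/(2(1-p))\}$; and convexity of $r(e^t)$ is indeed what licenses the Jensen step turning a weighted geometric mean of covering numbers into the single factor $r^{(-1)}(\cdots)$.

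As a proof, however, the attempt has a genuine gap, located exactly at the theorem's core. The step that produces $\exp\{p\lambda^2\beta^2/(2(1-p)^2)\}$ together with $r^{(-1)}\bigl(\frac{1}{\beta p}\int_0^{\beta p}r(N(\sigma^{(-1)}(u)))\,du\bigr)$ is never carried out: you ``invoke the chaining scheme underlying Theorem 2.8 of \cite{VasKozYamUKR}'', i.e.\ you appeal to the very statement being proved, which is circular. Moreover, your decomposition is structurally different from the one that actually yields these constants. In the cited argument, H\"older's inequality is applied \emph{once}, with the countable weight system $\alpha_0=1-p$, $\alpha_k=(1-p)p^k$, $k\ge1$ (so that $\sum_{k\ge0}\alpha_k=1$), the weight $\alpha_k$ attached to the level-$k$ chaining maximum whose increments have standard deviation of order $\beta p^k$; the second exponential term then falls out of the geometric sum $\sum_{k\ge1}\lambda^2\beta^2p^{2k}/(2\alpha_k)=p\lambda^2\beta^2/(2(1-p)^2)$, and the averaged integral comes from comparing the weighted sum $\sum_{k\ge1}(1-p)p^k\,r\bigl(N(\sigma^{(-1)}(\beta p^k))\bigr)$ with the integral of the decreasing function $u\mapsto r(N(\sigma^{(-1)}(u)))$ over $(0,\beta p]$. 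Your two-stage variant --- first split off $X(\mathbf{s}_0)$ with exponents $1/(1-p)$ and $1/p$, then chain the oscillation at the inflated parameter $\lambda/p$ and raise the result to the power $p$ --- is not shown to reproduce the coefficient $p/(1-p)^2$ on $\beta^2$ or the normalization $1/(\beta p)$, and you concede as much (``delicate bookkeeping'') without resolving it; a naive execution of the nested split gives worse constants. So the write-up verifies only the routine reductions and assumes everything nontrivial: it stops precisely where the content of \eqref{i_e} begins.
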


We shall minimize the right-hand side of (\ref{i_e}) with respect to $\lambda>0.$
\begin{cor}
\label{cor_main_th} Under the conditions of Theorem \ref{main_thrm}
we have
\begin{equation}
\label{cor_inequal_1} I_\mathbf{T}(\ve)\le2
\exp{\left\{-\frac{\ve^2(1-p)}{2\left(\gamma^2+\frac{\beta^2p}{1-p}\right)}
\right\}} r^{(-1)}\left(\frac{1}{\beta p} \int_0^{\beta
p}r\left(N(\sigma^{(-1)}(u))\right)du\right).
\end{equation}
\end{cor}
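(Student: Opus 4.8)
The plan is to exploit the fact that the inequality in Theorem~\ref{main_thrm} holds for \emph{every} $\lambda>0$, whereas the constant $2$ and the factor $C:=r^{(-1)}\!\left(\frac{1}{\beta p}\int_0^{\beta p}r(N(\sigma^{(-1)}(u)))\,du\right)>0$ do not depend on $\lambda$. Thus the bound has the form $I_\mathbf{T}(\ve)\le 2C\exp\{f(\lambda)\}$ valid for all $\lambda>0$, and since $2C>0$ and $\exp$ is increasing, the sharpest such bound is obtained by minimizing the exponent $f(\lambda)$ alone and then carrying the factor $2C$ through unchanged.

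First I would collect the $\lambda$-dependence of the exponent into a single quadratic,
\[
f(\lambda)=\frac12\frac{\lambda^2\gamma^2}{1-p}+p\frac{\lambda^2\beta^2}{2(1-p)^2}-\lambda\ve=\frac{A}{2}\,\lambda^2-\lambda\ve,\qquad A:=\frac{\gamma^2}{1-p}+\frac{p\beta^2}{(1-p)^2}.
\]
Because $0<p<1$ and the process is nondegenerate (so $\gamma>0$), the leading coefficient $A$ is strictly positive and $f$ is a strictly convex parabola in $\lambda$. Its unconstrained minimizer is $\lambda^\ast=\ve/A$, and since $\ve>0$ and $A>0$ this point is strictly positive, hence admissible in the range $\lambda>0$; verifying this admissibility is the only genuine check in the argument. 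Substituting $\lambda^\ast$ back gives $\min_{\lambda>0}f(\lambda)=f(\lambda^\ast)=-\ve^2/(2A)$.

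Finally I would reduce $A$ to a common denominator, $A=(\gamma^2(1-p)+p\beta^2)/(1-p)^2$, and verify the elementary identity
\[
\frac{1}{2A}=\frac{(1-p)^2}{2\bigl(\gamma^2(1-p)+p\beta^2\bigr)}=\frac{1-p}{2\bigl(\gamma^2+\frac{\beta^2 p}{1-p}\bigr)},
\]
which is precisely the exponent in (\ref{cor_inequal_1}). Inserting $\exp\{-\ve^2/(2A)\}$ together with the factor $2C$ yields the claimed inequality. Since the whole computation is a one-variable quadratic minimization, there is no real obstacle: the only points requiring care are confirming that the optimal $\lambda^\ast$ lies in $(0,\infty)$ and that the algebraic simplification of the coefficient $A$ is carried out correctly.
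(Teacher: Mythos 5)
Your proof is correct and takes essentially the same route as the paper: both treat the exponent in (\ref{i_e}) as a one-variable quadratic $\frac{A}{2}\lambda^2-\lambda\ve$ with $A=\frac{\gamma^2}{1-p}+\frac{p\beta^2}{(1-p)^2}$, minimize over $\lambda>0$, and substitute back, carrying the $\lambda$-independent factor through. Indeed your minimizer $\lambda^\ast=\ve/A=\frac{\ve(1-p)}{\gamma^2+\beta^2p/(1-p)}$ is the correct one, whereas the paper's displayed $\lambda^\ast$ contains a spurious minus sign and an extra factor in the denominator (an evident typo, since inserting the correct value produces exactly the exponent in (\ref{cor_inequal_1})).
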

\begin{proof}
Consider the right-hand side of (\ref{i_e}). To prove the corollary it is sufficient to minimize the following value
$$
\frac12\frac{\lambda^2\gamma^2}{1-p}
+p\frac{\lambda^2\beta^2}{2(1-p)^2}-\lambda\ve.$$
Differentiating this expression with respect to $\lambda,$ we get
$$\frac {d}{d\lambda}\left(
\frac12\frac{\lambda^2\gamma^2}{1-p}
+p\frac{\lambda^2\beta^2}{2(1-p)^2}-\lambda\ve\right)=\frac{\lambda\gamma^2}{1-p}
+p\frac{\lambda\beta^2}{(1-p)^2}-\ve.$$
Then, the minimum is achieved if
$$\lambda=\lambda^*=-\frac{\ve(1-p)}{2\left(\gamma^2+\frac{\beta^2p}{1-p}\right)}.$$
If we replace $\lambda$ by $\lambda^*$ in (\ref{i_e}), we obtain (\ref{cor_inequal_1}).
\end{proof}

Throughout the paper the field  $X=\{X(\mathbf{t}),\mathbf{t}\in\R^2_+\}$ is a Gaussian self-similar random field with index $\mathbf{H} =(H_1,H_2)\in (0,1)^2$ and with stationary rectangular increments. Denote $\mathbf{1}=(1,1).$ 
 Evidently,  $$\E[X(\mathbf{t})]^2=t_1^{2H_1}t_2^{2H_2}\E[X^2(\mathbf{1})],\quad \mathbf{t}=(t_1,t_2)\in \R^2_+.$$
In what follows we need some auxiliary results.
\begin{lmm}
\label{lemma_1}
For all $\mathbf{s}=(s_1,s_2)\in\R^2_+,$ $\mathbf{t}=(t_1,t_2)\in\R^2_+$ we have
\begin{eqnarray}
\label{lemma_ineq_1}
  \E\left[X(\mathbf{t})-X(s_1,t_2)\right]^2 &=& |t_1-s_1|^{2
H_1}t_2^{2H_2}\E
X^2(\mathbf{1}), \\
\label{lemma_ineq_2}
  \E\left[X(s_1,t_2)-X(\mathbf{s})\right]^2 &=& |t_2-s_2|^{2
H_2}s_1^{2H_2}\E X^2(\mathbf{1}).
\end{eqnarray}
\end{lmm}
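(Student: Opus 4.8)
The plan is to combine two ingredients: the given second moment identity forces the field to vanish on the coordinate axes, and the stationary rectangular increment property lets one transport any increment anchored on an axis to one anchored at the origin. I would establish (\ref{lemma_ineq_1}) and (\ref{lemma_ineq_2}) in parallel, since they are mirror images of one another under interchange of the two coordinates; only the choice of anchor point differs.

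First I would record the boundary behaviour. Setting $t_2=0$ in the identity $\E[X(\mathbf{t})]^2=t_1^{2H_1}t_2^{2H_2}\E X^2(\mathbf{1})$ gives $\E[X(t_1,0)]^2=0$, hence $X(t_1,0)=0$ almost surely for every $t_1\ge 0$, and symmetrically $X(0,t_2)=0$ almost surely. This is the most economical route; one could alternatively send a scaling factor to zero in the self-similarity relation, but the variance formula delivers the vanishing immediately.

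For (\ref{lemma_ineq_1}) I would take $t_1>s_1$ (the case $t_1<s_1$ follows from the symmetry of the square, and $t_1=s_1$ is trivial) and anchor the rectangular increment on the horizontal axis via $\mathbf{u}=(s_1,0)$, $\mathbf{v}=(t_1,t_2)$. Since $X(t_1,0)=X(s_1,0)=0$ almost surely, two of the four terms in $\Delta_{(s_1,0)}X(t_1,t_2)$ disappear, leaving precisely $X(t_1,t_2)-X(s_1,t_2)$. Applying the stationary increment property with $\mathbf{h}=(t_1-s_1,t_2)$ yields $\Delta_{(s_1,0)}X(t_1,t_2)\stackrel{d}{=}\Delta_{\mathbf{0}}X(t_1-s_1,t_2)$, and because $X(0,t_2)=X(t_1-s_1,0)=0$ almost surely the right-hand side collapses to the single value $X(t_1-s_1,t_2)$. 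Equality in distribution gives equality of second moments, and the variance identity then produces $(t_1-s_1)^{2H_1}t_2^{2H_2}\E X^2(\mathbf{1})$; the absolute value in (\ref{lemma_ineq_1}) absorbs the sign.

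The proof of (\ref{lemma_ineq_2}) is identical with the coordinates exchanged: anchoring at $\mathbf{u}=(0,s_2)$ makes $X(0,t_2)=X(0,s_2)=0$ reduce $\Delta_{(0,s_2)}X(s_1,t_2)$ to $X(s_1,t_2)-X(s_1,s_2)$, the shift $\mathbf{h}=(s_1,t_2-s_2)$ moves it to $\Delta_{\mathbf{0}}X(s_1,t_2-s_2)$, which collapses to $X(s_1,t_2-s_2)$, and the variance identity gives $s_1^{2H_1}|t_2-s_2|^{2H_2}\E X^2(\mathbf{1})$ (so the exponent on $s_1$ is $2H_1$). I expect the only real subtlety to be bookkeeping rather than depth: at each step one must check that the point being subtracted actually lies on a coordinate axis so that the almost sure vanishing applies, and one must respect the ordering under which the rectangular increment is defined, recovering the general case through the absolute value.
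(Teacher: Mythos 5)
Your proof is correct and follows essentially the same route as the paper's: vanishing of $X$ on the coordinate axes (a consequence of self-similarity), rewriting the difference as a rectangular increment anchored on an axis via $\mathbf{u}=(s_1,0)$ (resp. $(0,s_2)$), shifting it to the origin by stationarity of rectangular increments, and finishing with the self-similar variance identity. You are also right that the exponent on $s_1$ in (\ref{lemma_ineq_2}) should be $2H_1$; the $2H_2$ appearing in the paper's statement is a typo, as the conclusion of Lemma \ref{lemma_2}, which uses $s_1^{H_1}$, confirms.
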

\begin{proof}
Without loss of generality suppose that $s_1\leq t_1.$ It follows from self-similarity that for any
$s\in \R_+:$ $X(s,0)=X(0,s)=0$ a.s. Then the left-hand side of (\ref{lemma_ineq_1}) equals
$$\E\left(X(\mathbf{t})-X(t_1,0)-X(s_1,t_2)+X(s_1,0)\right)^2=\E\left(\Delta_{s_1,0} X(\mathbf{t})\right)^2.$$
Stationarity of the increments implies that
$$\E\left(\Delta_{s_1,0} X(\mathbf{t})\right)^2=\E\left(\Delta_{\mathbf{0}} X(t_1-s_1,t_2)\right)^2=\E\left(X(t_1-s_1,t_2)\right)^2.$$
Further,  self-similarity implies that
$$\E\left(X(\mathbf{t})-X(s_1,t_2)\right)^2=\E\left(X(t_1-s_1,t_2)\right)^2=|t_1-s_1|^{2
H_1}t_2^{2H_2}\E X^2(\mathbf{1}).$$ The proof of the equality (\ref{lemma_ineq_2}) can be done in a
similar way.
\end{proof}
\begin{lmm}
\label{lemma_2} Assume that $\E X^2(\mathbf{1})=1.$ For all $\mathbf{s}=(s_1,s_2)\in\R^2_+,$ $\mathbf{t}=(t_1,t_2)\in\R^2_+$ we have
\begin{equation}
\label{lemma_ineq3}
\left(\E\left[X(\mathbf{t})-X(\mathbf{s})\right]^2\right)^{1/2} \le
|t_1-s_1|^{H_1}t_2^{H_2}+|t_2-s_2|^{H_2}s_1^{H_1}.\end{equation}
\end{lmm}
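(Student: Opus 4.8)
The plan is to reduce the claim to the two exact variance identities already established in Lemma~\ref{lemma_1} by splitting the increment $X(\mathbf{t})-X(\mathbf{s})$ across a single intermediate point and then invoking the triangle inequality for the $L^2(\Omega)$-norm. Concretely, I would insert the ``mixed'' corner $(s_1,t_2)$ and write
\[
X(\mathbf{t})-X(\mathbf{s}) = \bigl(X(t_1,t_2)-X(s_1,t_2)\bigr) + \bigl(X(s_1,t_2)-X(s_1,s_2)\bigr).
\]
Since the map $Y\mapsto\left(\E Y^2\right)^{1/2}$ is the $L^2(\Omega)$-norm on the Gaussian space, Minkowski's inequality yields
\[
\left(\E[X(\mathbf{t})-X(\mathbf{s})]^2\right)^{1/2} \le \left(\E[X(t_1,t_2)-X(s_1,t_2)]^2\right)^{1/2} + \left(\E[X(s_1,t_2)-X(s_1,s_2)]^2\right)^{1/2}.
\]

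The second step is to evaluate each of the two terms on the right using Lemma~\ref{lemma_1}. Under the normalization $\E X^2(\mathbf{1})=1$, the identity (\ref{lemma_ineq_1}) gives exactly $\left(\E[X(t_1,t_2)-X(s_1,t_2)]^2\right)^{1/2}=|t_1-s_1|^{H_1}t_2^{H_2}$, while the identity (\ref{lemma_ineq_2}) gives $\left(\E[X(s_1,t_2)-X(s_1,s_2)]^2\right)^{1/2}=|t_2-s_2|^{H_2}s_1^{H_1}$. Substituting these two expressions into the Minkowski bound produces precisely the right-hand side of (\ref{lemma_ineq3}), which completes the argument.

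The argument is essentially mechanical, so there is no real obstacle; the only point requiring care is the choice of the intermediate corner. Inserting $(s_1,t_2)$ rather than $(t_1,s_2)$ is exactly what makes the two factors come out as $t_2^{H_2}$ and $s_1^{H_1}$, so as to match the stated bound; the alternative split would give the equally valid but different estimate $|t_1-s_1|^{H_1}s_2^{H_2}+|t_2-s_2|^{H_2}t_1^{H_1}$. I would also note that no case distinction on the orderings of $s_i$ and $t_i$ is needed, since Lemma~\ref{lemma_1} is phrased with the absolute values $|t_i-s_i|$ and is therefore symmetric in each coordinate, so the bound holds for arbitrary $\mathbf{s},\mathbf{t}\in\R^2_+$.
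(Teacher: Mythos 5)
Your proposal is correct and follows essentially the same route as the paper's own proof: insert the mixed corner $(s_1,t_2)$, apply Minkowski's inequality in $L^2(\Omega)$, and evaluate the two resulting increments exactly via the identities (\ref{lemma_ineq_1}) and (\ref{lemma_ineq_2}) of Lemma~\ref{lemma_1} under the normalization $\E X^2(\mathbf{1})=1$. Your closing remarks on why $(s_1,t_2)$ rather than $(t_1,s_2)$ is the right intermediate point, and on the symmetry of Lemma~\ref{lemma_1} making case distinctions unnecessary, are accurate refinements of the paper's (more terse) argument.
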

\begin{proof}
Using the Minkowski inequality, we get
$$\left(\E\left[X(\mathbf{t})-X(\mathbf{s})\right]^2\right)^{1/2}=
\left(\E\left[X(\mathbf{t})-X(s_1,t_2)+X(s_1,t_2)+X(\mathbf{s})\right]^2\right)^{1/2}$$
$$\le\left(\E\left[X(\mathbf{t})-X(s_1,t_2)\right]^2\right)^{1/2}+\left(\E\left[X(s_1,t_2)-X(\mathbf{s})\right]^2\right)^{1/2}.$$
It follows from Lemma \ref{lemma_1} that
$$\E\left[X(\mathbf{t})-X(s_1,t_2)\right]^2=|t_1-s_1|^{2
H_1}t_2^{2H_2},$$ and
$$\E\left[X(s_1,t_2)-X(\mathbf{s})\right]^2=|t_2-s_2|^{2
H_2}s_1^{2H_2}.$$ Hence, inequality (\ref{lemma_ineq3}) holds.
\end{proof}

\section{Random fields on space $(\mathbf{T},\rho_1)$}
In this section we put $\rho(\mathbf{t},\mathbf{s})=\rho_1(\mathbf{t},\mathbf{s})=\max_{i=1,2} |t_i-s_i|,\mathbf{t},\mathbf{s}\in \mathbf{T}\subset \R^+_2.$
\begin{cor}
\label{cor_r1_1} Let $\sigma(h)=C h^\alpha, 0<\alpha\le 1, C>0$  and $\mathbf{T}=[0,T]^2$ in Theorem
\ref{main_thrm}. Then
\begin{equation}
\label{cor_inequal_2} I_{[0,T]^2}(\ve)\le8
\exp{\left\{-\frac{\ve^2(1-p)}{2\left(\gamma^2+\frac{C^2T^{2\alpha}p}{2^{2\alpha}(1-p)}\right)}
\right\}}\left(\frac{e}{p}\right)^{2/\alpha}
\end{equation}
for all $0<p<1$ and $\ve>0$.
\end{cor}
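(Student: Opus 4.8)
The plan is to specialize Corollary \ref{cor_main_th} to the metric $\rho_1$, the modulus $\sigma(h)=Ch^\alpha$, and the set $\mathbf{T}=[0,T]^2$, making the three ingredients $\beta$, $N(\ve)$ and $\sigma^{(-1)}$ explicit while $\gamma$ is kept as it stands. First I would compute $\beta$. Since the $\rho_1$-balls are axis-parallel squares, the Chebyshev radius $\inf_{\mathbf{s}\in\mathbf{T}}\sup_{\mathbf{t}\in\mathbf{T}}\rho_1(\mathbf{t},\mathbf{s})$ of $[0,T]^2$ is attained at the centre $(T/2,T/2)$ and equals $T/2$; hence $\beta=\sigma(T/2)=C(T/2)^\alpha$ and $\beta^2=C^2T^{2\alpha}/2^{2\alpha}$. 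This already reproduces the term $C^2T^{2\alpha}p/(2^{2\alpha}(1-p))$ inside the exponential of \eqref{cor_inequal_2}, and since the exponential factor in \eqref{cor_inequal_1} does not involve $r$, it carries over unchanged.

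Next I would make the metric-entropy factor explicit. A closed $\rho_1$-ball of radius $\ve$ is a square of side $2\ve$, so covering each edge $[0,T]$ by $\lceil T/(2\ve)\rceil$ intervals gives $N(\ve)=\lceil T/(2\ve)\rceil^2$. Inverting $\sigma$ yields $\sigma^{(-1)}(u)=(u/C)^{1/\alpha}$, whence the clean identity $T/(2\sigma^{(-1)}(u))=(\beta/u)^{1/\alpha}$. On the range $u\in(0,\beta p]$ we have $u\le\beta$, so this quantity is $\ge1$ and $\lceil x\rceil\le 2x$ applies, producing
\[
N(\sigma^{(-1)}(u))\le\bigl(2(\beta/u)^{1/\alpha}\bigr)^2=4\,(\beta/u)^{2/\alpha}.
\]

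The decisive step is to choose the free function $r$ and to evaluate the inner expression $r^{(-1)}\bigl(\tfrac{1}{\beta p}\int_0^{\beta p}r(N(\sigma^{(-1)}(u)))\,du\bigr)$. I would take the power family $r(x)=x^\delta$ with $0<\delta<\alpha/2$; then $r(e^t)=e^{\delta t}$ is convex, and by the bound above $r(N(\sigma^{(-1)}(u)))$ is of order $u^{-2\delta/\alpha}$, so the entropy integral over $(0,\beta p]$ converges precisely when $2\delta/\alpha<1$. Substituting $N\le4(\beta/u)^{2/\alpha}$ and integrating gives
\[
\frac{1}{\beta p}\int_0^{\beta p}N(\sigma^{(-1)}(u))^\delta\,du\le\frac{4^\delta\,p^{-2\delta/\alpha}}{1-2\delta/\alpha},
\]
and applying $r^{(-1)}(y)=y^{1/\delta}$ yields the entropy-factor bound $4\,p^{-2/\alpha}\,(1-2\delta/\alpha)^{-1/\delta}$.

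Finally I would optimize over $\delta$, which is where the Euler constant appears. Since $(1-s)^{-1/s}\to e$ as $s\to0^+$, setting $s=2\delta/\alpha$ gives $(1-2\delta/\alpha)^{-1/\delta}=\bigl((1-s)^{-1/s}\bigr)^{2/\alpha}\to e^{2/\alpha}$, and $e^{2/\alpha}$ is the infimum of this quantity over admissible $\delta$. As the left-hand side $I_{[0,T]^2}(\ve)$ does not depend on $\delta$, letting $\delta\to0^+$ drives the entropy factor down to $4(e/p)^{2/\alpha}$; multiplying by the prefactor $2$ of \eqref{cor_inequal_1} produces the constant $8$ and the claimed bound \eqref{cor_inequal_2}. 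The main obstacle is exactly this last stage: recognizing that the power family is the right class of test functions $r$, checking that admissibility forces $\delta<\alpha/2$, and carrying out the $\delta\to0^+$ limit that converts the divergence-rate constant $(1-2\delta/\alpha)^{-1/\delta}$ into $e^{2/\alpha}$. By comparison, the geometric computations of $\beta$ and $N(\ve)$ are routine.
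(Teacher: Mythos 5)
Your proposal is correct and follows essentially the same route as the paper: compute $\beta=C(T/2)^\alpha$, bound $N(\sigma^{(-1)}(u))$ by absorbing the additive constant in the covering number (your bound $4(\beta/u)^{2/\alpha}$ is identical to the paper's $(TC^{1/\alpha}/u^{1/\alpha})^2$, justified by the same observation that $u\le\beta p$ makes the main term exceed $1$), take $r(v)=v^\mu$ with $0<\mu<\alpha/2$, integrate to get $4\,p^{-2/\alpha}(1-2\mu/\alpha)^{-1/\mu}$, and let $\mu\to0^+$ to turn $(1-2\mu/\alpha)^{-1/\mu}$ into $e^{2/\alpha}$, with the prefactor $2$ giving the constant $8$. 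The only differences are cosmetic (ceiling-function bookkeeping for $N(\ve)$ and the explicit remark that the exponential factor is unaffected by the choice of $r$), so nothing further is needed.
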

\begin{proof}
We have
$$\beta=C\left(\frac{T}{2}\right)^\alpha,~~ N(u)\le
\left(\frac{TC^{1/\alpha}}{2u^{1/\alpha}}+1\right)^2.$$
Put
$r(v)=v^\mu,$ $v\in \R_+,$ $0<\mu<\alpha/2$.  It follows from Corollary \ref{cor_main_th} that
$$I_{[0,T]^2}(\ve)\le2
\exp{\left\{-\frac{\ve^2}{2\left(\gamma^2+\frac{C^2T^{2\alpha}p}{2^{2\alpha}(1-p)}\right)}
\right\}}Z(p),$$ where
\begin{equation}
\label{zp} Z(p)=\left(\frac{1}{\beta p} \int_0^{\beta
p}\left(N(\sigma^{(-1)}(u))\right)^\mu du\right)^{1/\mu}.
\end{equation}
Since $u \le\beta p,$ we have
$$\frac{TC^{1/\alpha}}{2u^{1/\alpha}}\ge\frac{TC^{1/\alpha}}{2(\beta
p)^{1/\alpha}}>\frac{2TC^{1/\alpha}}{2TC^{1/\alpha}}=1.$$ Therefore, we obtain
$$Z(p)
\le\left(\frac{1}{\beta p} \int_0^{\beta
p}\left(\frac{TC^{1/\alpha}}{2u^{1/\alpha}}+1\right)^{2\mu}du\right)^{1/\mu}\le
\left(\frac{1}{\beta p} \int_0^{\beta
p}\left(\frac{TC^{1/\alpha}}{u^{1/\alpha}}\right)^{2\mu}du\right)^{1/\mu}=$$
$$=
T^2C^{2/\alpha}\frac{1}{(\beta
p)^{1/\mu}}\left(\int_0^{\beta
p}\left(\frac{1}{u^{1/\alpha}}\right)^{2\mu}du\right)^{1/\mu}=
T^2C^{2/\alpha}\frac{1}{(\beta
p)^{2/\alpha}}\frac{1}{(1-2\mu/\alpha)^{1/\mu}}.$$ As
$\mu\rightarrow0$, we have
$$Z(p)\le T^2C^{2/\alpha}\frac{1}{(\beta
p)^{2/\alpha}}e^{2/\alpha}=4 \left(\frac{e}{p}\right)^{2/\alpha}.$$
The last inequality completes the proof.
\end{proof}

From now on we denote $H=\min\{H_1,H_2\},$ where $\mathbf{H}=(H_1,H_2)\in (0,1)^2$ is the index of self-similarity.
\begin{prop}
\label{prop_1} Let $\mathbf{T}=[0,1]^2, \rho=\rho_1,$ and
$X=\{X(\mathbf{t}),\mathbf{t}\in \R_+^2\}$ be a centered Gaussian self-similar
random field of order $\mathbf{H}=(H_1,H_2)\in (0,1)^2$ with stationary rectangular increments. Then for all
$0<p<1$ we have
\begin{equation}
\label{prop_1_ineq}
\pr\left\{\sup_{\mathbf{t}\in[0,1]^2}|X(\mathbf{t})|>\ve\right\}\le
8\exp\left\{-\frac{\ve^2(1-p)}{2\left(1+\frac{4p}{2^{2H}(1-p)}\right)}\right\}\left(\frac
ep\right)^{2/H},\quad\ve>0.
\end{equation}

\end{prop}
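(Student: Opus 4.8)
The plan is to deduce the proposition directly from Corollary~\ref{cor_r1_1} by identifying the correct parameters, namely $T=1$, $C=2$, $\alpha=H$ and $\gamma=1$. All that genuinely has to be done is to produce an admissible majorant $\sigma(h)=2h^H$ for the standard deviation of the increments in the metric $\rho_1$ over the cube $[0,1]^2$, and to evaluate the maximal variance $\gamma$.

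I would begin from the Minkowski estimate of Lemma~\ref{lemma_2}, which under the normalization $\E X^2(\mathbf 1)=1$ reads
$$\left(\E[X(\mathbf t)-X(\mathbf s)]^2\right)^{1/2}\le |t_1-s_1|^{H_1}t_2^{H_2}+|t_2-s_2|^{H_2}s_1^{H_1}.$$
Restricting $\mathbf t,\mathbf s$ to $[0,1]^2$, I would use compactness of the cube to bound the two multiplicative factors $t_2^{H_2}\le 1$ and $s_1^{H_1}\le 1$. For the remaining factors I would invoke the definition $H=\min\{H_1,H_2\}$: since $0\le|t_i-s_i|\le 1$ and $H_i\ge H$, the inequality $x^{H_i}\le x^H$ holds on $[0,1]$, whence $|t_i-s_i|^{H_i}\le|t_i-s_i|^H$. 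Consequently, whenever $\rho_1(\mathbf t,\mathbf s)=\max_i|t_i-s_i|\le h$, each summand is at most $h^H$ and the left-hand side is at most $2h^H$. This verifies condition (\ref{cond_sigm}) with the continuous increasing function $\sigma(h)=2h^H$, $\sigma(0)=0$, so Corollary~\ref{cor_r1_1} applies with $C=2$ and $\alpha=H\in(0,1)$.

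Next I would compute $\gamma$. From the variance identity $\E X^2(\mathbf t)=t_1^{2H_1}t_2^{2H_2}\,\E X^2(\mathbf 1)=t_1^{2H_1}t_2^{2H_2}$ recorded just before Lemma~\ref{lemma_1}, the function $\mathbf t\mapsto\E X^2(\mathbf t)$ is nondecreasing in each coordinate, so its supremum over $[0,1]^2$ is attained at $\mathbf 1$ and equals $1$; hence $\gamma=1$. Substituting $T=1$, $C=2$, $\alpha=H$ and $\gamma=1$ into (\ref{cor_inequal_2}) reproduces verbatim the exponential factor and the factor $8(e/p)^{2/H}$ of (\ref{prop_1_ineq}), since $C^2T^{2\alpha}=4$ and $2^{2\alpha}=2^{2H}$.

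The only delicate point---and the nearest thing to an obstacle---is confirming that $\sigma(h)=2h^H$ majorizes the increments for \emph{all} $h\ge 0$, not merely for $h\le 1$, because condition (\ref{cond_sigm}) is a global requirement. For $h\ge 1$ the differences $|t_i-s_i|$ on the cube cannot exceed $1$, so the left-hand side of (\ref{cond_sigm}) is bounded by $2$, while $2h^H\ge 2$ as $h^H\ge 1$; thus the majorant is globally valid and the proposition follows.
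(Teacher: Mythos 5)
Your proposal is correct and follows essentially the same route as the paper's own proof: both derive $\sigma(h)=2h^H$ from Lemma~\ref{lemma_2} by bounding $t_2^{H_2},s_1^{H_1}\le 1$ and using $|t_i-s_i|^{H_i}\le|t_i-s_i|^{H}$ on the unit cube, compute $\gamma=1$ from the variance identity, and substitute $T=1$, $C=2$, $\alpha=H$ into Corollary~\ref{cor_r1_1}. Your closing check that the majorant in condition (\ref{cond_sigm}) remains valid for $h\ge 1$ is a small point of care the paper leaves implicit, but it does not change the argument.
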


\begin{proof}
We have from inequality (\ref{lemma_ineq3}) that for all $\mathbf{t},\mathbf{s}\in [0,1]^2$
$$\left(\E\left[X(\mathbf{t})-X(\mathbf{s})\right]^2\right)^{1/2} \le |t_1-s_1|^{H_1}t_2^{H_2}+|t_2-s_2|^{H_2}s_1^{H_1}$$
$$\leq |t_1-s_1|^{H_1}+|t_2-s_2|^{H_2}\leq 2 \max_{i=1,2} |t_i-s_i|^{H_i}\leq 2 \max_{i=1,2} |t_i-s_i|^{H}=2[\rho(\mathbf{s},\mathbf{t})]^H.$$
Therefore, it follows from (\ref{cond_sigm}) that $\sigma(h)=2h^H$ and $\gamma=1,$ where $\gamma$ is defined in (\ref{defin_beta_gama}).
Thus, inequality (\ref{prop_1_ineq}) follows from (\ref{cor_inequal_2}), where $C=2,T=1,\alpha=H.$
\end{proof}

Denote   $S_{T_1 T_2}=[0,T_1]\times[0,T_2]\subset \R^2_+, \ T_1>0,T_2>0.$
The self-similarity of random field gives a correspondence between the probability distribution of extremes that defined in $[0,1]^2$ and in $S_{T_1 T_2}.$
\begin{cor}
\label{cor_pr_1} Under the conditions of Proposition \ref{prop_1}, we have
\begin{equation}
\label{cor_pr_1_ineq}
\begin{gathered}
\pr\left\{\sup_{\mathbf{t}\in S_{T_1 T_2}}\frac{|X(\mathbf{t})|}{T_1^{H_1}T_2^{H_2}}>\ve\right\}=\pr \left\{\sup_{\mathbf{t}\in[0,1]^2}|X(\mathbf{t})|>\ve\right\}\\
\le 8\exp\left\{-\frac{\ve^2(1-p)}{2\left(1+\frac{4p}{2^{2H}(1-p)}\right)}\right\}\left(\frac
ep\right)^{2/H},
\end{gathered}
\end{equation}
where $\ve>0, p\in (0,1).$
\end{cor}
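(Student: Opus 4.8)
The plan is to prove the equality in \eqref{cor_pr_1_ineq} by exhibiting an explicit distributional identity between the supremum of the normalized field over $S_{T_1 T_2}$ and the supremum of $X$ over the unit square $[0,1]^2$, and then to invoke Proposition~\ref{prop_1} to bound the latter. The key observation is that the defining scaling relation in Definition~\ref{def_taq} is exactly what converts a supremum over the rectangle $S_{T_1T_2}$ into a supremum over $[0,1]^2$, with the normalization factor $T_1^{H_1}T_2^{H_2}$ absorbing the deterministic scaling constant.

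First I would make the change of variables $t_1 = T_1 u_1$, $t_2 = T_2 u_2$, so that $\mathbf{t}$ ranges over $S_{T_1T_2}$ exactly when $\mathbf{u}=(u_1,u_2)$ ranges over $[0,1]^2$. Applying Definition~\ref{def_taq} with $a_1=T_1$, $a_2=T_2$ gives the equality of finite-dimensional distributions
\[
\left\{X(T_1 u_1, T_2 u_2),\ \mathbf{u}\in[0,1]^2\right\} \stackrel{d}{=} \left\{T_1^{H_1}T_2^{H_2} X(\mathbf{u}),\ \mathbf{u}\in[0,1]^2\right\}.
\]
Dividing both families by the positive constant $T_1^{H_1}T_2^{H_2}$ and taking the supremum of the absolute value over $[0,1]^2$, the two suprema have the same distribution. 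Since the event $\{\sup|\cdot|>\ve\}$ is a functional of the law of the supremum, equality in distribution of the random fields yields equality of the two probabilities, which is precisely the first line of \eqref{cor_pr_1_ineq}.

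The second step is then immediate: the right-hand probability $\pr\{\sup_{\mathbf{t}\in[0,1]^2}|X(\mathbf{t})|>\ve\}$ is bounded by the right-hand side of \eqref{prop_1_ineq} from Proposition~\ref{prop_1}, which is exactly the bound appearing in the second line of \eqref{cor_pr_1_ineq}. Combining the equality from Step~1 with this inequality completes the argument.

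The main subtlety to handle carefully is the passage from equality of finite-dimensional distributions to equality of the distribution of the supremum. Definition~\ref{def_taq} only asserts equality of finite-dimensional distributions, whereas the supremum is an uncountable functional; this requires the separability of the field $X$ (assumed throughout via the framework of Theorem~\ref{main_thrm}) so that the supremum over $[0,1]^2$ coincides almost surely with the supremum over a countable dense subset, on which finite-dimensional equality does transfer. I would note this reduction explicitly but not belabor it, since separability is a standing hypothesis. Apart from this point, the proof is a direct and essentially computation-free application of self-similarity followed by Proposition~\ref{prop_1}.
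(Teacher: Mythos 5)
Your proof is correct and follows essentially the same route as the paper: invoke self-similarity with $a_1=T_1$, $a_2=T_2$ to identify the finite-dimensional distributions of $\left\{T_1^{-H_1}T_2^{-H_2}X(T_1t_1,T_2t_2)\right\}$ with those of $\{X(\mathbf{t})\}$, conclude equality in distribution of the suprema, and then apply Proposition~\ref{prop_1}. Your explicit remark that separability is needed to pass from finite-dimensional equality to equality of the (uncountable) supremum is a point the paper's proof silently glosses over, and it is a welcome addition rather than a deviation.
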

\begin{proof}
It follows from self-similarity that
$\left\{T_1^{-H_1}T_2^{-H_2}X\left(T_1 t_1,T_2 t_2\right), \\ \mathbf{t}\in \R_+\right\}$ and
$\{X(\mathbf{t}),\\ \mathbf{t}\in \R_+ \}$
have the same finite dimensional distributions. Therefore,
$$\sup_{\mathbf{t}\in S_{T_1 T_2}}\frac{|X(\mathbf{t})|}{T_1^{H_1}T_2^{H_2}}\stackrel{d}{=}\sup_{\mathbf{t}\in[0,1]^2}|X(\mathbf{t})|.$$ Hence, inequality (\ref{cor_pr_1_ineq}) follows from
Proposition \ref{prop_1}.
\end{proof}
\begin{cor}
\label{cor_pr_2} Let $\ve>2.$ Under the conditions of Proposition \ref{prop_1} we have
\begin{equation}
\label{cor_pr_2_ineq}
\pr\left\{\sup_{t\in[0,1]^2}|X(\mathbf{t})|>\ve\right\}\le8e^{\frac
2H+\frac12}\ve^{\frac
4H}\exp\left\{-\frac{3\ve^2}{2\left(4^{1-H}+3\right)}\right\}.
\end{equation}
\end{cor}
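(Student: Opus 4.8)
The plan is to apply Proposition~\ref{prop_1} with the single, $\ve$-dependent choice $p=\ve^{-2}$, and then to tidy up the resulting bound with one elementary estimate of the exponent. Since $\ve>2$ we have $p=\ve^{-2}\in(0,1)$, so the substitution is admissible; in fact the only role of the hypothesis $\ve>2$ is to keep $p$ inside $(0,1)$. First I would record the algebraic simplification $\frac{4}{2^{2H}}=2^{2-2H}=4^{1-H}$, so that the exponent in (\ref{prop_1_ineq}) can be rewritten as
\[
-\frac{\ve^2(1-p)}{2\left(1+\frac{4^{1-H}p}{1-p}\right)}=-\frac{\ve^2(1-p)^2}{2\bigl(1+(4^{1-H}-1)p\bigr)}.
\]
Inserting $p=\ve^{-2}$ turns the prefactor into $\left(\frac{e}{p}\right)^{2/H}=(e\ve^2)^{2/H}=e^{2/H}\ve^{4/H}$, which already explains the appearance of $\ve^{4/H}$, and turns the exponent into $-\dfrac{(\ve^2-1)^2}{2(\ve^2+4^{1-H}-1)}$.

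It then remains to compare this exponent with the target $-\frac{3\ve^2}{2(4^{1-H}+3)}$. Writing $a=4^{1-H}$ and $x=\ve^2$ for brevity, I would establish
\[
-\frac{(x-1)^2}{2(x+a-1)}\le \frac12-\frac{3x}{2(a+3)}.
\]
Clearing the (positive) denominators and cancelling reduces this to $a\bigl((x-2)^2+a-1\bigr)\ge 0$, which holds for every $x$ because $H\in(0,1)$ forces $a=4^{1-H}>1$. This is the one genuine computation in the argument and the step where I expect to spend the effort; once it is in hand, the additive constant $\tfrac12$ on the right-hand side is exactly what produces the extra factor $e^{1/2}$, since $e^{2/H}\cdot e^{1/2}=e^{2/H+1/2}$.

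Finally I would assemble the pieces: substituting the prefactor $e^{2/H}\ve^{4/H}$ and the exponential bound $\exp\{\text{exponent}\}\le e^{1/2}\exp\{-\frac{3\ve^2}{2(a+3)}\}$ into (\ref{prop_1_ineq}) gives
\[
\pr\left\{\sup_{\mathbf{t}\in[0,1]^2}|X(\mathbf{t})|>\ve\right\}\le 8e^{2/H+1/2}\ve^{4/H}\exp\left\{-\frac{3\ve^2}{2(4^{1-H}+3)}\right\},
\]
which is precisely (\ref{cor_pr_2_ineq}). In summary, the proof is a direct specialization of Proposition~\ref{prop_1} at $p=\ve^{-2}$, with the only nontrivial point being the quadratic inequality $(\ve^2-2)^2+(4^{1-H}-1)\ge 0$ used to pass from the exact exponent to the stated clean form.
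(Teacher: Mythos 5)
Your proof is correct and takes essentially the same route as the paper: substitute $p=\ve^{-2}$ into Proposition~\ref{prop_1}, which yields the prefactor $e^{2/H}\ve^{4/H}$ and the exponent $-\frac{(\ve^2-1)^2}{2(\ve^2+4^{1-H}-1)}$, and then absorb a factor $e^{1/2}$ while passing to the clean exponent $-\frac{3\ve^2}{2(4^{1-H}+3)}$. The only (cosmetic) difference is in the elementary estimate: the paper bounds the exponent in two crude steps using $\ve^2-1\ge 3$, hence genuinely uses $\ve>2$, whereas your single exact comparison, which after clearing denominators is $4^{1-H}\bigl((\ve^2-2)^2+4^{1-H}-1\bigr)\ge 0$, is valid for all $\ve>1$ (note $p\in(0,1)$ already only requires $\ve>1$, not $\ve>2$, so your parenthetical remark is a slight overstatement but harmless).
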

\begin{proof}
Put $p=1/{\ve^2}$ in (\ref{prop_1_ineq}). Then
$$\pr\left\{\sup_{t\in[0,1]^2}|X(\mathbf{t})|>\ve\right\}\le 8\exp\left\{-\frac{\ve^2-1}{2\left(1+4^{1-H}(\ve^{2}-1)^{-1}\right)}\right\}
e^{2/H}\ve^{4/H}$$
$$\leq 8\exp\left\{-\frac{3\ve^2}{2\left(3+4^{1-H}\right)}\right\}e^{2/H}\ve^{4/H}e^{\frac{3}{2\left(3+4^{1-H}\right)}}\leq
8e^{\frac
2H+\frac12}\ve^{\frac
4H}\exp\left\{-\frac{3\ve^2}{2\left(4^{1-H}+3\right)}\right\}.$$
The corollary is proved.
\end{proof}
We obtained the upper bound for the probability of exceeding of a self-similar Gaussian random field above the level $\ve>2$ that defined in $[0,1]^2.$

For normalized fields we now prove the upper bound for such probabilities defined in $\R^2_+.$
Denote $x \vee y=\max\{x,y\}.$
\begin{theor}
\label{theorem_3} Let $X=\{X(\mathbf{t}),\mathbf{t}\in \R^2_+\}$ be a centered Gaussian
self-similar random field with index $\mathbf{H}=(H_1,H_2)\in (0,1)^2$ and stationary rectangular increments.
Let a function $c:(0,+\infty)\rightarrow (0,+\infty)$ and a sequence $\{b_n,~n\in \mathbb{N}\bigcup \{0\}\}$ satisfy the following conditions
\begin{itemize}
\item[$(i)$]  $c$ is increasing on $[1,+\infty),$ \quad $c(t)\rightarrow \infty, t\rightarrow\infty,$ \quad and $c\left(\frac{1}{t}\right)=c(t),t\geq1;$
\item[$(ii)$] $b_0=1,$ $b_n<b_{n+1}, n\in \mathbb{N},$ \quad  $b_n\rightarrow\infty, n\rightarrow\infty,$ and $M:=\inf_{k\in0\cup\mathbb{N}}\left(\frac{b_k}{b_{k+1}}\right)^{H_1+H_2}c(b_k)>0;$
\item[$(iii)$] for all $D>0$ the following series converges
\begin{equation*}
\label{theor_series} \sum_{k=1}^{\infty}
\exp\left\{-D\left(\frac{b_k^{H_1+H_2}}{b_{k+1}^{H_1+H_2}}c(b_k)\right)\right\}<+\infty.
\end{equation*}
\end{itemize}
Then for all $\ve>2/M$ we have
\begin{equation}
\label{theor_inequal_3}
\begin{gathered}
\pr\left\{\sup_{t\in\R^2_{+}}\frac{|X(\mathbf{t})|}{(t_1\vee
t_2)^{H_1+H_2}c(t_1\vee t_2)}>\ve\right\} \le \\
\leq 16 e^{\frac{2}{H}+\frac 12}\ve^{4/H}\sum_{k=0}^{\infty}
\exp\left\{-\frac{3\ve^2}{2(4^{1-H}+3)}\left(\frac{b_k^{H_1+H_2}}{b_{k+1}^{H_1+H_2}}c(b_k)\right)^2\right\}
\left(\frac{b_k^{H_1+H_2}}{b_{k+1}^{H_1+H_2}}c(b_k)\right)^{\frac{4}{H}}
=:\widetilde{Z}(\ve).
\end{gathered}
\end{equation}
In this case with probability 1 for all $\mathbf{t}\in (0,+\infty)^2$ the inequality
holds:
$$|X(\mathbf{t})|<\xi(t_1\vee t_2)^{H_1+H_2}c(t_1\vee t_2),$$
where $\xi$ is a random variable such that for all
$\ve>2/M:~\pr\{\xi>\ve\}\le \widetilde{Z}(\ve).$
\end{theor}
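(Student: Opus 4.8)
\section*{Proof proposal}

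The plan is to cover $\R_+^2$ (away from the coordinate axes and origin, where $X$ vanishes a.s.\ by the argument used in Lemma~\ref{lemma_1}) by countably many rectangular ``shells'' adapted to the sequence $\{b_k\}$, to bound the normalized supremum on each shell using Corollaries~\ref{cor_pr_1} and~\ref{cor_pr_2}, and then to apply countable subadditivity. Because the weight $(t_1\vee t_2)^{H_1+H_2}c(t_1\vee t_2)$ is built from the \emph{symmetric} function $c$ with $c(1/t)=c(t)$, I would split $(0,+\infty)^2$ into the large-scale region $\{t_1\vee t_2\ge1\}$ and the small-scale region $\{0<t_1\vee t_2\le1\}$. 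Treating these two regions in a parallel fashion is exactly what produces the factor $16=2\cdot 8$ in $\widetilde Z(\ve)$, the prefactor $8$ being inherited from Corollary~\ref{cor_pr_2}. Throughout set $q_k:=\bigl(b_k/b_{k+1}\bigr)^{H_1+H_2}c(b_k)$, so that condition $(ii)$ reads $M=\inf_k q_k>0$.

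For the large region I would take $A_k=\{\mathbf{t}\in\R_+^2:\ b_k\le t_1\vee t_2\le b_{k+1}\}$, $k\ge0$, so that $\bigcup_k A_k=\{t_1\vee t_2\ge1\}$ and $A_k\subseteq S_{b_{k+1}b_{k+1}}=[0,b_{k+1}]^2$. Since $t\mapsto t^{H_1+H_2}c(t)$ is increasing on $[1,+\infty)$, the weight is at least $b_k^{H_1+H_2}c(b_k)$ on $A_k$, whence
\[
\sup_{\mathbf{t}\in A_k}\frac{|X(\mathbf{t})|}{(t_1\vee t_2)^{H_1+H_2}c(t_1\vee t_2)}
\le\frac{1}{q_k}\,\sup_{\mathbf{t}\in S_{b_{k+1}b_{k+1}}}\frac{|X(\mathbf{t})|}{b_{k+1}^{H_1+H_2}}.
\]
By Corollary~\ref{cor_pr_1} the right-hand supremum has the same law as $\sup_{[0,1]^2}|X|$, so the probability that the left side exceeds $\ve$ is at most $\pr\{\sup_{[0,1]^2}|X|>\ve q_k\}$. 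The hypothesis $\ve>2/M$ and condition $(ii)$ give $\ve q_k\ge\ve M>2$, so Corollary~\ref{cor_pr_2} applies with level $\ve q_k$ and yields precisely the $k$-th summand of $\widetilde Z(\ve)$, with prefactor $8$.

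The small region is where $c(1/t)=c(t)$ does its work. Setting $B_k=\{1/b_{k+1}\le t_1\vee t_2\le 1/b_k\}$, which covers $\{0<t_1\vee t_2\le1\}$ and satisfies $B_k\subseteq[0,1/b_k]^2$, I would use that for $\mathbf{t}\in B_k$ one has $1/(t_1\vee t_2)\in[b_k,b_{k+1}]$, hence $c(t_1\vee t_2)=c(1/(t_1\vee t_2))\ge c(b_k)$ and $(t_1\vee t_2)^{H_1+H_2}\ge b_{k+1}^{-(H_1+H_2)}$. Rescaling the square $[0,1/b_k]^2$ to $[0,1]^2$ via Corollary~\ref{cor_pr_1} reproduces the \emph{same} factor $1/q_k$, so the probability that the normalized supremum over $B_k$ exceeds $\ve$ is again at most $\pr\{\sup_{[0,1]^2}|X|>\ve q_k\}$, i.e.\ the identical $k$-th summand. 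Summing over both families $\{A_k\}$ and $\{B_k\}$ and using subadditivity gives the stated bound with the factor $16$.

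Finally I would verify that $\widetilde Z(\ve)<\infty$ and $\widetilde Z(\ve)\to0$ as $\ve\to\infty$: since $q_k\ge M>0$ one has $q_k^2\ge M q_k$, so the Gaussian factor $\exp\{-c\,\ve^2q_k^2\}$ is dominated by $\exp\{-c\,\ve^2 M q_k\}$, the polynomial factor $q_k^{4/H}$ is absorbed into $\exp\{\eta q_k\}$ for small $\eta>0$, and convergence follows from condition $(iii)$ with a suitable $D>0$; a dominated-convergence argument then yields $\widetilde Z(\ve)\to0$. For the pathwise claim I would take $\xi=\sup_{\mathbf{t}\in(0,+\infty)^2}|X(\mathbf{t})|/\bigl((t_1\vee t_2)^{H_1+H_2}c(t_1\vee t_2)\bigr)$: the main inequality gives $\pr\{\xi>\ve\}\le\widetilde Z(\ve)$ for $\ve>2/M$, and $\widetilde Z(\ve)\to0$ forces $\xi<\infty$ a.s., which is the asserted inequality. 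The main obstacle I anticipate is the bookkeeping of the two scale regimes, in particular checking that the inversion symmetry of $c$ reproduces exactly the same summand $q_k$ for the small-scale shells (so the two contributions combine cleanly into $16$) and confirming, uniformly in $k$, that the rescaled level $\ve q_k$ stays above the threshold $2$ demanded by Corollary~\ref{cor_pr_2}.
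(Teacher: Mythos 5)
Your proposal matches the paper's proof essentially step for step: the same shell decomposition adapted to $\{b_k\}$ split at scale $1$, the same lower-bounding of the weight on each shell (using monotonicity of $c$ on the large shells and the symmetry $c(1/t)=c(t)$ on the small ones), the same rescaling to $[0,1]^2$ via Corollary~\ref{cor_pr_1}, the same application of Corollary~\ref{cor_pr_2} at the uniformly admissible level $\ve q_k\ge \ve M>2$, and the same merging of the two families into the factor $16$. As a minor bonus, starting your small-scale shells at $k=0$ covers the annulus $\{1/b_1<t_1\vee t_2<1\}$, which the paper's shells $B_{-k}$, $k\ge1$, formally omit --- an indexing slip that does not affect the final bound, since that region contributes exactly the $k=0$ summand already present.
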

\begin{proof}
Denote $$B_k=[0,b_{k+1}]^2 \setminus [0,b_{k})^2,k\geq 0, \quad B_{-k}=\left[0,\frac{1}{b_{k}}\right]^2 \setminus \left[0,\frac{1}{b_{k+1}}\right)^2, \quad k\geq 1.$$
Let us remark that $\mathbf{T}=[0,+\infty)^2=\bigcup_{k=-\infty}^{+\infty}B_k.$ Denote
$$\widetilde{P}(\mathbf{T},\ve)=\pr\left\{\sup_{\mathbf{t}\in \mathbf{T}}\frac{|X(\mathbf{t})|}{(t_1\vee
t_2)^{H_1+H_2}c(t_1\vee t_2)}>\ve\right\}.$$
Evidently, we get
$$\widetilde{P}(\R^2_{+},\ve)\leq \widetilde{P}(\R^2_{+}\setminus [0,1)^2,\ve)+ \widetilde{P}([0,1]^2,\ve) .$$
Firstly, consider $\widetilde{P}(\R^2_{+}\setminus [0,1)^2,\ve).$
Note that, if $\mathbf{t}\in B_k,k\geq 0$ then $b_k\leq t_1\vee t_2\leq b_{k+1}$ and $c(b_k)\leq c(t_1\vee t_2)\leq c(b_{k+1}),k\geq 1.$ Therefore, we get
$$\pr\left\{\sup_{\mathbf{t}\in\R^2_{+}\setminus [0,1)^2}\frac{|X(\mathbf{t})|}{(t_1\vee
t_2)^{H_1+H_2}c(t_1\vee t_2)}>\ve\right\} \leq
\sum_{k=0}^{\infty}\pr\left\{\sup_{\mathbf{t}\in B_{k}}\frac{|X(\mathbf{t})|}{(t_1\vee t_2)^{H_1+H_2}c(t_1\vee
t_2)}>\ve\right\}$$
$$\leq\sum_{k=0}^{\infty}\pr\left\{\sup_{\mathbf{t}\in B_{k}}\frac{|X(\mathbf{t})|}{b_{k+1}^{H_1+H_2}}\frac{b_{k+1}^{H_1+H_2}}{b_k^{H_1+H_2}c(b_k)}>\ve\right\}\leq \sum_{k=0}^{\infty}\pr\left\{\sup_{\mathbf{t}\in [0,b_{k+1}]^2}\frac{|X(\mathbf{t})|}{b_{k+1}^{H_1+H_2}}\frac{b_{k+1}^{H_1+H_2}}{b_k^{H_1+H_2}c(b_k)}>\ve\right\}$$
$$\leq\sum_{k=0}^{\infty}\pr\left\{\sup_{\mathbf{t}\in [0,b_{k+1}]^2}\frac{|X(\mathbf{t})|}{b_{k+1}^{H_1+H_2}}>\ve \frac{b_k^{H_1+H_2}c(b_k)}{b_{k+1}^{H_1+H_2}}\right\}.$$

From corollaries \ref{cor_pr_1} and \ref{cor_pr_2} we obtain  that for $\ve>2/M$
$$\widetilde{P}(\R^2_+\setminus [0,1)^2,\ve) \leq\sum_{k=0}^{\infty}\pr\left\{\sup_{\mathbf{t}\in [0,1]^2}|X(\mathbf{t})|>\ve \frac{b_k^{H_1+H_2}c(b_k)}{b_{k+1}^{H_1+H_2}}\right\}$$
$$\leq 8 e^{\frac{2}{H}+\frac 12}\ve^{4/H}\sum_{k=0}^{\infty}
\exp\left\{-\frac{3\ve^2}{2(4^{1-H}+3)}\left(\frac{b_k^{H_1+H_2}}{b_{k+1}^{H_1+H_2}}c(b_k)\right)^2\right\}
\left(\frac{b_k}{b_{k+1}}\right)^{4\frac{H_1+H_2}{H}}(c(b_k))^{4/H}.
$$

Consider $\widetilde{P}([0,1]^2,\ve).$ Note that, if $\mathbf{t}\in B_{-k},k\geq 1$ then $\frac{1}{b_{k+1}}\leq t_1\vee t_2\leq \frac{1}{b_{k}}$ and $c(b_{k})\leq c(t_1\vee t_2)=c(\frac{1}{t_1\vee t_2})\leq c(b_{k+1}),$ $k\geq 1.$ Therefore, we have
$$\pr\left\{\sup_{\mathbf{t}\in [0,1]^2}\frac{|X(\mathbf{t})|}{(t_1\vee
t_2)^{H_1+H_2}c(t_1\vee t_2)}>\ve\right\}\leq
\sum_{k=1}^{\infty}\pr\left\{\sup_{\mathbf{t}\in B_{-k}}\frac{|X(\mathbf{t})|}{(t_1\vee t_2)^{H_1+H_2} c(t_1\vee t_2)}>\ve\right\}$$
$$\leq \sum_{k=1}^{\infty}\pr\left\{\sup_{\mathbf{t}\in B_{-k}}\frac{|X(\mathbf{t})|}{b_{k+1}^{-H_1-H_2}}\frac{b_{k}^{-H_1-H_2}}{b_k^{-H_1-H_2}c(b_k)}>\ve\right\}\leq \sum_{k=1}^{\infty}\pr\left\{\sup_{\mathbf{t}\in [0,b_{k}^{-1}]^2}\frac{|X(\mathbf{t})|}{b_{k}^{-H_1-H_2}}\frac{b_{k+1}^{H_1+H_2}}{b_k^{H_1+H_2}c(b_k)}>\ve\right\}$$
$$\leq\sum_{k=1}^{\infty}\pr\left\{\sup_{\mathbf{t}\in [0,b^{-1}_{k}]^2}\frac{|X(\mathbf{t})|}{b_{k}^{-H_1-H_2}}>\ve \frac{b_k^{H_1+H_2}c(b_k)}{b_{k+1}^{H_1+H_2}}\right\}.$$

From corollaries \ref{cor_pr_1} and \ref{cor_pr_2} we obtain  that for $\ve>2/M$
$$\widetilde{P}([0,1]^2,\ve) \leq\sum_{k=1}^{\infty}\pr\left\{\sup_{\mathbf{t}\in [0,1]^2}|X(t_1,t_2)|>\ve \frac{b_k^{H_1+H_2}c(b_k)}{b_{k+1}^{H_1+H_2}}\right\}$$
$$\leq 8 e^{\frac{2}{H}+\frac 12}\ve^{4/H}\sum_{k=1}^{\infty}
\exp\left\{-\frac{3\ve^2}{2(4^{1-H}+3)}\left(\frac{b_k^{H_1+H_2}}{b_{k+1}^{H_1+H_2}}c(b_k)\right)^2\right\}
\left(\frac{b_k}{b_{k+1}}\right)^{4\frac{H_1+H_2}{H}}(c(b_k))^{4/H}.
$$
The  theorem is proved.
\end{proof}
The following corollary is an immediate consequence of Theorem \ref{theorem_3}.
\begin{cor}
\label{main_cor_1}
Let $M=\inf_{k\in\{0\}\cup\mathbb{N}}\left(\frac{b_k}{b_{k+1}}\right)^{H_1+H_2}c(b_k)>0.$ Denote
$$u=\frac{3}{(4^{1-H}+3)}\frac{ M^2}{4} \quad \text{ and } \quad v_k=\frac{2}{M^2}\left(\frac{b_k^{H_1+H_2}}{b_{k+1}^{H_1+H_2}}c(b_k)\right)^2,k\geq 0.$$
If for any  $\mathbf{H}\in (0,1)^2$ the series $\sum_{k=0}^{\infty}\frac{v_k^{2/H}}{e^{v_k}}$ converges, then for any $\ve>\frac{2}{M}\sqrt{\frac{2}{3}\left(4^{1-H}+3\right)}$
\begin{equation}
\label{main_cor_1_ineq}
\begin{gathered}
\pr\left\{\sup_{t\in\R^2_{+}}\frac{|X(\mathbf{t})|}{(t_1\vee
t_2)^{H_1+H_2}c(t_1\vee t_2)}>\ve\right\} \leq 16\sqrt{2} \left(\frac{e}{2}\right)^{2/H}
\ve^{4/H}\left(\sum_{k=0}^{\infty}\frac{v_k^{2/H}}{e^{v_k}}\right)M^{4/H} e^{-u \ve^2}.
\end{gathered}
\end{equation}
\end{cor}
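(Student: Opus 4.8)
The plan is to take the explicit bound $\widetilde Z(\ve)$ produced by Theorem \ref{theorem_3} and to express every summand in terms of the single quantity $v_k$. Abbreviating $a_k = \frac{b_k^{H_1+H_2}}{b_{k+1}^{H_1+H_2}}c(b_k)$, the definition of $v_k$ reads $a_k^2 = \frac{M^2}{2}v_k$, so inside the series the exponent $-\frac{3\ve^2}{2(4^{1-H}+3)}a_k^2$ becomes exactly $-u\ve^2 v_k$, while the algebraic factor becomes $a_k^{4/H} = (M^2/2)^{2/H}v_k^{2/H}$. After pulling the $\ve$-free scalars in front, the content of the corollary reduces to replacing the summand $e^{-u\ve^2 v_k}$ by $e^{-u\ve^2}e^{-v_k}$, that is, to extracting one factor $e^{-u\ve^2}$ and stripping the remaining $\ve$-dependence out of the series so that the residual sum $\sum_k v_k^{2/H}e^{-v_k}$ is $\ve$-free and finite by hypothesis.

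Two elementary observations drive this replacement, and I would record them first. Since $M = \inf_k a_k$ we have $a_k \ge M$, hence $v_k = \frac{2}{M^2}a_k^2 \ge 2$ for all $k$. Next, the lower restriction $\ve > \frac{2}{M}\sqrt{\frac23(4^{1-H}+3)}$ is calibrated precisely so that $u\ve^2 > 2$: inserting the definition of $u$, the factors $M^2$ and $(4^{1-H}+3)$ cancel and leave this clean bound. Granting $v_k\ge2$ and $u\ve^2>2$, the term-wise estimate
\[
e^{-u\ve^2 v_k} \le e^{-u\ve^2}\,e^{-v_k}
\]
holds, because the logarithm of the quotient equals $-u\ve^2 v_k + u\ve^2 + v_k = -(u\ve^2-1)(v_k-1)+1$, and the product $(u\ve^2-1)(v_k-1)$ exceeds $1$. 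Multiplying by $v_k^{2/H}\ge0$ and summing over $k$ then gives $\sum_k e^{-u\ve^2 v_k}v_k^{2/H} \le e^{-u\ve^2}\sum_k v_k^{2/H}e^{-v_k}$.

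Feeding this back into the rewritten $\widetilde Z(\ve)$ yields inequality (\ref{main_cor_1_ineq}) once the leading constants are collected, which is the last routine step. I expect the only genuinely delicate point to be exactly this constant bookkeeping: one must carry the prefactor $16\,e^{2/H+1/2}$ through the substitution $a_k^2=\frac{M^2}{2}v_k$, observe that $e^{2/H+1/2}\cdot2^{-2/H} = e^{1/2}(e/2)^{2/H}$, and then confirm that the scalar multiplying $\ve^{4/H}M^{4/H}e^{-u\ve^2}\sum_k v_k^{2/H}e^{-v_k}$ is precisely the one asserted. Here my computation produces the factor $16\,e^{1/2}(e/2)^{2/H}$, and I would double-check this against the stated $16\sqrt2\,(e/2)^{2/H}$, since the present argument forces $e^{1/2}=\sqrt e$ rather than $\sqrt2$; reconciling this numeric prefactor is the step I would treat most carefully.
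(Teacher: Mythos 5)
Your proof is correct and follows essentially the same route as the paper: the paper likewise rewrites $\widetilde{Z}(\ve)$ in terms of $v_k$ via $a_k^2=\frac{M^2}{2}v_k$ (so the exponent becomes $u\ve^2 v_k$ and the algebraic factor $\frac{M^{4/H}}{2^{2/H}}v_k^{2/H}$), checks $u\ve^2>2$ and $v_k\ge 2$, and applies $u\ve^2+v_k\le u\ve^2 v_k$ to split off $e^{-u\ve^2}$ and leave the $\ve$-free convergent series. Your concern about the prefactor is also justified: the paper's own chain produces $16\,e^{2/H+1/2}\,2^{-2/H}=16\,e^{1/2}(e/2)^{2/H}$ exactly as you computed, and since $e^{1/2}>2^{1/2}$ the stated constant $16\sqrt{2}\,(e/2)^{2/H}$ is not implied by the argument — the $\sqrt{2}$ in the corollary is evidently a typo for $\sqrt{e}$.
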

\begin{proof}
It is clear that  $u\ve^2>2$ and $v_k>2,k\geq 0.$
Recall that for $u\ve^2,v_k>2$ we have $u\ve^2+v_k\leq u\ve^2 v_k.$
It follows from (\ref{theor_inequal_3}) that for $\ve>\frac{2}{M}\sqrt{\frac{2}{3}\left(4^{1-H}+3\right)}>\frac{2}{s}$ we have
$$
\widetilde{Z}(\ve)= 16 e^{\frac{2}{H}+\frac 12} \ve^{4/H}
\sum_{k=0}^{\infty}\frac{s^{4/H}}{2^{2/H}}\frac{v_k^{2/H}}{\exp\{u\ve^2 v_k\}}\leq 16\sqrt{2} \left(\frac{e}{2}\right)^{2/H}
\ve^{4/H}\left(\sum_{k=0}^{\infty}\frac{v_k^{2/H}}{e^{v_k}}\right)M^{4/H} e^{-u \ve^2}.
$$
The corollary is proved.
\end{proof}
Consider an example of applying Corollary \ref{main_cor_1}.

\textbf{Example 1.}
Put $b_k=e^k,k=0,1,\ldots,$ and $c(t)=\sqrt{\ln\left(|\ln t |+e\right)},t\geq 1$ in Theorem \ref{theorem_3}.
Then $M=\inf_{k\in0\cup\mathbb{N}}\left(\frac{b_k}{b_{k+1}}\right)^{H_1+H_2}c(b_k)=e^{-(H_1+H_2)},$ and
$$u=\frac{3}{(4^{1-H}+3)}\frac{ s^2}{4} =\frac{3}{4(4^{1-H}+3)}e^{-2(H_1+H_2)},$$
$$v_k=2e^{2(H_1+H_2)}\frac{\ln\left(k+e\right)}{e^{2(H_1+H_2)}}=2\ln\left(k+e\right),k\geq 0.$$
Then inequality (\ref{main_cor_1_ineq}) has the form
$$
\begin{gathered}
\pr\left\{\sup_{t\in\R^2_{+}}\frac{|X(\mathbf{t})|}{(t_1\vee
t_2)^{H_1+H_2}\sqrt{\ln(|\ln(t_1\vee t_2)|+e)}}>\ve\right\} \\
\leq 16\sqrt{2} e^{2/H}\ve^{4/H}
\left(\sum_{k=0}^{\infty}\frac{\left(\ln(k+e)\right)^{2/H}}{(k+e)^2}\right) e^{-4\frac{H_1+H_2}{H}} \exp\left\{-u\ve^2\right\}\\\leq 16 \sqrt{2} e^{2/H-8}\ve^{4/H}
\left(\sum_{k=0}^{\infty}\frac{\left(\ln(k+e)\right)^{2/H}}{(k+e)^2}\right) \exp\left\{-\frac{3 \ve^2}{4(4^{1-H}+3)}e^{-2(H_1+H_2)}\right\}.
\end{gathered}
$$
Thus, we obtain the upper bound for probability distribution of extremes of normalized self-similar Gaussian random field with stationary rectangular increments that defined in $\R^2_+$.
\section{Random fields on $(\mathbf{T},\rho_2)$}
Recall the notation of the metric $\rho_2(\mathbf{t},\mathbf{s})=\sum_{i=1,2}|t_i-s_i|^H_i, \mathbf{t}=(t_1,t_2)\in \R^2_+, \mathbf{s}=(s_1,s_2)\in \R^2_+,$ where $H=(H_1,H_2)\in (0,1)$ is the index of self-similarity of a field $X.$
Now we want to obtain result witch is similar to Proposition \ref{prop_1}, but  with metric $\rho_2$.

Let us remember that $N(u)$ is the minimal number of closed $\rho$-balls with radius $u$ needed to cover space
$(\mathbf{T},\rho)$.
First let us prove the estimate for $N(u)$ in the case $\rho=\rho_2$ and $\mathbf{T}=S_{T_1T_2}.$
\begin{lmm}
\label{prop_metric}
Let $\rho=\rho_2$ and $\mathbf{T}=S_{T_1T_2}.$ Then
$$N(u)\leq 2 \left(\frac{T_1}{4 K_1 u^{\frac{1}{H_1}}}+\frac{3}{2}\right)\left(\frac{T_2}{4 K_2 u^{\frac{1}{H_2}}}+\frac{3}{2}\right), u>0,$$
where$$K_1=\left(\frac{H_2}{H_1+H_2}\right)^{\frac{1}{H_1}}, \quad K_2=\left(\frac{H_1}{H_1+H_2}\right)^{\frac{1}{H_2}}.$$
\end{lmm}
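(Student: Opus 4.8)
The plan is to exhibit an explicit finite family of closed $\rho_2$-balls of radius $u$ covering $S_{T_1T_2}$ and then to count them, since any such covering bounds $N(u)$ from above. First I would record the geometry of a $\rho_2$-ball: the closed ball of radius $u$ about $\mathbf{c}=(c_1,c_2)$ is $\{\mathbf{t}:|t_1-c_1|^{H_1}+|t_2-c_2|^{H_2}\le u\}$, which for $H_i<1$ is a concave, diamond-like region rather than a rectangle. The natural length scales come from the largest inscribed axis-parallel rectangle $[-w_1,w_1]\times[-w_2,w_2]$: maximizing $w_1w_2$ subject to $w_1^{H_1}+w_2^{H_2}=u$ is a one-line Lagrange computation giving $w_1=K_1u^{1/H_1}$, $w_2=K_2u^{1/H_2}$ with $K_1,K_2$ exactly as stated, and one checks $w_1^{H_1}=\frac{H_2}{H_1+H_2}u$, $w_2^{H_2}=\frac{H_1}{H_1+H_2}u$, so the extreme corner sits on the sphere. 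This fixes the spacings $\alpha:=2K_1u^{1/H_1}$ and $\beta:=2K_2u^{1/H_2}$.

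The key idea is to place the centers not on a plain grid but on a staggered lattice $\{(\alpha i,\beta j):i+j\text{ even}\}$, i.e. two copies of the $2\alpha\times2\beta$ grid offset by $(\alpha,\beta)$, whose fundamental cell has area $2\alpha\beta$. Because the balls are non-convex they interlock, so this lattice has only half the density of the naive inscribed-rectangle tiling; this is exactly what produces the overall prefactor $2$ and halves the leading term relative to a plain rectangular grid.

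The heart of the argument, and the step I expect to be the main obstacle, is to check that this lattice is a $u$-net: every $\mathbf{t}\in\R^2$ lies within $\rho_2$-distance $u$ of some center. By periodicity it suffices to treat one quarter-cell $[0,\alpha]\times[0,\beta]$, on which the two relevant centers are $(0,0)$ and $(\alpha,\beta)$, so I must show $\min\{x^{H_1}+y^{H_2},(\alpha-x)^{H_1}+(\beta-y)^{H_2}\}\le u$ throughout. The maximum of this minimum occurs where the two expressions are equal, and a Lagrange/concavity argument (using $H_i<1$) pins the maximizer at the cell centre $(\alpha/2,\beta/2)$, where the value is $(\alpha/2)^{H_1}+(\beta/2)^{H_2}=w_1^{H_1}+w_2^{H_2}=u$. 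Thus the calibration of $K_1,K_2$ is precisely what places the worst-covered point on the boundary sphere, and establishing this tightness rigorously is the delicate part.

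Finally I would count the centers needed for $S_{T_1T_2}$. A box point can only be served by a center within one fundamental cell of it, so it suffices to take the checkerboard points with $\alpha i\in[-\alpha,T_1+\alpha]$ and $\beta j\in[-\beta,T_2+\beta]$, allowing at most $\frac{T_1}{\alpha}+3$ and $\frac{T_2}{\beta}+3$ grid lines in the two directions. Since the checkerboard selects at most half of the $(\frac{T_1}{\alpha}+3)(\frac{T_2}{\beta}+3)$ nodes, substituting $\alpha=2K_1u^{1/H_1}$ and $\beta=2K_2u^{1/H_2}$ yields $N(u)\le\frac12(\frac{T_1}{2K_1u^{1/H_1}}+3)(\frac{T_2}{2K_2u^{1/H_2}}+3)=2(\frac{T_1}{4K_1u^{1/H_1}}+\frac32)(\frac{T_2}{4K_2u^{1/H_2}}+\frac32)$, as claimed. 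The exact boundary bookkeeping and the rounding hidden in the ``half of the nodes'' estimate are routine and I would not grind through them here.
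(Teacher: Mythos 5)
Your proposal is correct and follows essentially the same route as the paper: the paper covers $S_{T_1T_2}$ by exactly your staggered (checkerboard) tiling, introduced there via the auxiliary metric $\rho_3(\mathbf{x},\mathbf{y})=\frac{|y_1-x_1|}{a_1}+\frac{|y_2-x_2|}{a_2}$ with $a_i=2K_iu^{1/H_i}$, whose unit balls are your interlocking diamonds, with the same calibration of $K_1,K_2$, the same density-halving factor $2$, and the same count. The only presentational difference is that your ``delicate'' max--min step is packaged in the paper as the inclusion $V_{\rho_3}(1)\subset V_{\rho_2}(u)$, which reduces to the one-variable concavity of $\lambda\mapsto(2\lambda)^{H_1}H_2+\left(2(1-\lambda)\right)^{H_2}H_1$ on $[0,1]$ (maximal value $H_1+H_2$ at $\lambda=1/2$, using $H_i<1$) --- a cleaner and fully rigorous substitute for your Lagrange argument pinning the worst point at the cell centre.
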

\begin{proof}
Consider an auxiliary metric $\rho_3=\{\rho_3(\mathbf{x},\mathbf{y})=\frac{|y_1-x_1|}{a_1}+\frac{|y_2-x_2|}{a_2}, \mathbf{x}=(x_1,x_2)\in \R^2, \mathbf{y}=(y_1,y_2)\in \R^2\},$ with $a_1>0,a_2>0.$
A closed $\rho_3$-ball with radius $1$ in space $(\mathbf{T},\rho_3)$  is a set
$V_{\rho_3}(1)=\left\{\mathbf{x}=(x_1,x_2)\in\R^2, \frac{|x_1|}{a_1}+\frac {|x_2|}{a_2}\leq 1\right\}.$
The minimum number of $V_{\rho_3}(1)$ needed to cover space
$(\mathbf{T},\rho_3)$ is less then $2\left(\frac{T_1+a_1}{2a_1}+1\right)\left(\frac{T_2+a_2}{2a_2}+1\right).$

Put
$$a_1=2\left(\frac{H_2}{H_1+H_2}\right)^{\frac{1}{H_1}}\ve^{\frac{1}{H_1}}=2K_1 \ve^{\frac{1}{H_1}},$$
$$a_2=2\left(\frac{H_1}{H_1+H_2}\right)^{\frac{1}{H_2}}\ve^{\frac{1}{H_2}}=2K_2 \ve^{\frac{1}{H_2}}.$$
It is not hard to prove that  $V_{\rho_3}(1)\subset V_{\rho_2}(\ve).$
Hence,
$$N_{\rho_2}(\ve)\leq N_{\rho_3}(1)\leq 2 \left(\frac{T_1}{4 K_1 \ve^{\frac{1}{H_1}}}+\frac{3}{2}\right)\left(\frac{T_2}{4 K_2 \ve^{\frac{1}{H_2}}}+\frac{3}{2}\right).$$
\end{proof}
To prove the next statement, we need some notation.
Denote $$T_\eta=\max\{T_1^{H_1},T_2^{H_2}\},~H=\min\{H_1,H_2\}, ~ Q=\frac{1}{H_1}+\frac{1}{H_2},$$ $$N_1=\left(\frac{H_1+H_2}{H_2}\right)^{\frac{1}{H_1}}+3, \quad N_2=\left(\frac{H_1+H_2}{H_1}\right)^{\frac{1}{H_2}}+3.$$
\begin{prop}
\label{prop_2} Let
$(\mathbf{T},\rho)=(S_{T_1T_2},\rho_2)$,~$T_1\ge1,T_2\ge1$ and
$X=\{X(\mathbf{t}),\mathbf{t}\in \R^2_+\}$ be a centered self-similar
Gaussian random field with stationary rectangular increments.   Under the conditions of
Theorem \ref{main_thrm}, for all
$0<p<1$ we have
\begin{equation}
\label{prop_2_ineq}
I_{\mathbf{T}}(\ve)=\pr\left\{\sup_{\mathbf{t}\in\mathbf{T}}|X(t)|>\ve\right\}\leq N_1N_2 \left(\frac{e}{p}\right)^Q
\exp{\left\{-\frac{\ve^2(1-p)}{2\left(T_1^{2H_1}T_2^{2H_2}+\frac{p}{1-p} 4^{1-H}T_\eta^4\right)}
\right\}}, \ve>0.
\end{equation}
\end{prop}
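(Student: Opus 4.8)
My plan is to apply Corollary~\ref{cor_main_th} with the metric $\rho=\rho_2$, reading off $\sigma,\gamma,\beta$ from the self-similar structure and controlling the entropy factor $r^{(-1)}(\cdots)$ through Lemma~\ref{prop_metric}, exactly as Corollary~\ref{cor_r1_1} did for $\rho_1$. Normalising $\E X^2(\mathbf{1})=1$, I first read $\sigma$ off Lemma~\ref{lemma_2}: for $\mathbf{t},\mathbf{s}\in S_{T_1T_2}$ one has $t_2\le T_2$ and $s_1\le T_1$, so
\[
\left(\E[X(\mathbf{t})-X(\mathbf{s})]^2\right)^{1/2}\le |t_1-s_1|^{H_1}T_2^{H_2}+|t_2-s_2|^{H_2}T_1^{H_1}\le T_\eta\,\rho_2(\mathbf{t},\mathbf{s}),
\]
which gives $\sigma(h)=T_\eta h$ and $\sigma^{(-1)}(u)=u/T_\eta$ (the case $\alpha=1$ of Corollary~\ref{cor_r1_1}). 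Since $\E X^2(\mathbf{t})=t_1^{2H_1}t_2^{2H_2}$ attains its maximum on $S_{T_1T_2}$ at the corner $(T_1,T_2)$, I get $\gamma^2=T_1^{2H_1}T_2^{2H_2}$, the first term in the exponent. For $\beta$ the two coordinates decouple under $\rho_2$, so $\inf_{\mathbf{s}}\sup_{\mathbf{t}}\rho_2=(T_1/2)^{H_1}+(T_2/2)^{H_2}=:R$ and $\beta=T_\eta R$; using $T_i^{H_i}\le T_\eta$ and $2^{H_i}\ge 2^{H}$ gives $R\le 2^{1-H}T_\eta$, hence $\beta^2\le 4^{1-H}T_\eta^4$. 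Feeding $\gamma^2$ and this bound on $\beta^2$ into Corollary~\ref{cor_main_th} (the bound on $\beta^2$ enlarges the denominator and hence the exponential) reproduces the stated exponential factor.

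The heart of the proof is the estimate of $Z(p):=r^{(-1)}\!\big(\tfrac{1}{\beta p}\int_0^{\beta p}r(N(\sigma^{(-1)}(u)))\,du\big)$ with $r(v)=v^\mu$, $0<\mu<1/Q$, taking $\mu\to0^+$ at the end as in Corollary~\ref{cor_r1_1}. The substitution $w=u/T_\eta$ (note $\beta p/T_\eta=Rp$) reduces it to $Z(p)=\big(\tfrac{1}{Rp}\int_0^{Rp}N(w)^\mu\,dw\big)^{1/\mu}$. Lemma~\ref{prop_metric} then yields
\[
N(w)\le 2\,w^{-Q}\Big(\tfrac{T_1}{4K_1}+\tfrac32 w^{1/H_1}\Big)\Big(\tfrac{T_2}{4K_2}+\tfrac32 w^{1/H_2}\Big).
\]
Each bracket is increasing in $w$, so I would replace them by their values at the upper endpoint $w=Rp$, pull the resulting constants outside, and integrate $\int_0^{Rp}w^{-Q\mu}\,dw=(Rp)^{1-Q\mu}/(1-Q\mu)$. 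Raising to the power $1/\mu$ and letting $\mu\to0^+$ turns $(1-Q\mu)^{-1/\mu}$ into $e^{Q}$ and, after splitting $(Rp)^{-Q}=(Rp)^{-1/H_1}(Rp)^{-1/H_2}$ between the two factors, produces
\[
Z(p)\le 2e^{Q}\Big[\tfrac{T_1}{4K_1}(Rp)^{-1/H_1}+\tfrac32\Big]\Big[\tfrac{T_2}{4K_2}(Rp)^{-1/H_2}+\tfrac32\Big].
\]

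The final constant chase is where I expect the real difficulty, and it is exactly the point at which the anisotropy matters: because $S_{T_1T_2}$ may be strongly elongated, the singular term $w^{-1/H_i}$ need not dominate the additive $\tfrac32$ over the whole interval $(0,Rp)$, so the ``absorb the $+1$'' device used in Corollary~\ref{cor_r1_1} is unavailable and the $\tfrac32$ has to be carried all the way through. The remedy is to use $R\ge (T_i/2)^{H_i}$, which gives $(Rp)^{-1/H_i}\le (2/T_i)p^{-1/H_i}$ and hence $\tfrac{T_i}{4K_i}(Rp)^{-1/H_i}\le \tfrac{1}{2K_i}p^{-1/H_i}$; then $p<1$ forces $p^{-1/H_i}\ge1$, which lets me factor $p^{-1/H_i}$ out of each bracket to obtain $(\tfrac{1}{2K_i}+\tfrac32)p^{-1/H_i}$. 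Invoking the identity $\tfrac{1}{2K_i}+\tfrac32=\tfrac12(K_i^{-1}+3)=\tfrac12 N_i$ (recall $N_i=K_i^{-1}+3$) collapses this to
\[
Z(p)\le 2e^{Q}\cdot\tfrac12 N_1\cdot\tfrac12 N_2\,p^{-Q}=\tfrac12 N_1N_2\left(\tfrac{e}{p}\right)^{Q}.
\]
Multiplying by the leading factor $2$ from Corollary~\ref{cor_main_th} cancels the $\tfrac12$, so the prefactor becomes exactly $N_1N_2(e/p)^{Q}$ and, together with the exponential from the first paragraph, gives (\ref{prop_2_ineq}).
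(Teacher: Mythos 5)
Your proof is correct and follows essentially the same route as the paper's: Corollary~\ref{cor_main_th} with $\sigma(h)=T_\eta h$ read off Lemma~\ref{lemma_2}, $\gamma^2=T_1^{2H_1}T_2^{2H_2}$, $\beta=T_\eta\left(\left(\frac{T_1}{2}\right)^{H_1}+\left(\frac{T_2}{2}\right)^{H_2}\right)$, the entropy bound of Lemma~\ref{prop_metric} fed through $r(v)=v^\mu$ with $\mu\to0^+$, and the final relaxation $\beta^2\le 4^{1-H}T_\eta^4$. The only difference is bookkeeping in the $Z(p)$ estimate: you freeze the increasing brackets at the endpoint $w=Rp$ and discharge the additive $\frac32$ at the end via $p^{-1/H_i}\ge1$, while the paper inflates the $\frac32$ pointwise under the integral by the factor $\left(\beta/u\right)^{1/H_i}\ge1$ — both rest on $(T_i/2)^{H_i}\le R$ and yield the identical constant $N_i/2$, so your remark that a direct absorption as in Corollary~\ref{cor_r1_1} fails is apt but the paper circumvents it the same way in substance.
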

\begin{proof}
Recall that $\rho_2(\mathbf{s},\mathbf{t})=|s_1-t_1|^{H_1}+|s_2-t_2|^{H_2},\mathbf{s}=(s_1,s_2),\mathbf{t}=(t_1,t_2),\mathbf{t},\mathbf{s}\in \mathbf{T}.$ From Lemma \ref{lemma_2} we get
$$\sup_{\rho(\mathbf{s},\mathbf{t})\le h}\left(\E\left(X(\mathbf{t})-X(\mathbf{s})\right)^2\right)^{1/2}\le \sup_{\rho(\mathbf{s},\mathbf{t})\le h}\left(t_2^{H_2}|s_1-t_1|^{H_1}+t_1^{H_1}|s_2-t_2|^{H_2}\right)\le T_\eta h.$$
Thus, we can put $\sigma(h)=T_\eta h$ in Theorem \ref{main_thrm}.  From (\ref{defin_beta_gama}) we have
$$\beta=\sigma\left(\left(\frac{T_1}{2}\right)^{H_1}+\left(\frac{T_2}{2}\right)^{H_2}\right)
=T_\eta\left(\left(\frac{T_1}{2}\right)^{H_1}+\left(\frac{T_2}{2}\right)^{H_2}\right).$$
It is clear that
$$\gamma^2=\sup_{\mathbf{t}\in \mathbf{T}}\E X^2(\mathbf{t})=T_1^{2H_1}T_2^{2H_2}\E X^2(\mathbf{1})=T_1^{2H_1}T_2^{2H_2}.$$
From Lemma \ref{prop_metric} we have
$$N(u)\leq 2 \left(\frac{T_1}{4 K_1 u^{\frac{1}{H_1}}}+\frac32\right)\left(\frac{T_2}{4 K_2 u^{\frac{1}{H_2}}}+\frac32\right),$$
and therefore
$$N(\sigma^{-1}(u))\leq 2 \left(\frac{T_1 {T_\eta}^{\frac{1}{H_1}}}{4 K_1 u^{\frac{1}{H_1}}}+\frac32\right)\left(\frac{T_2 {T_\eta}^{\frac{1}{H_2}}}{4 K_2 u^{\frac{1}{H_2}}}+\frac32\right).$$
It follows from $\beta>\beta p\ge u$ that
$$1<\left(\left(\frac{T_1}{2}\right)^{H_1}+\left(\frac{T_2}{2}\right)^{H_2}\right)^{\frac{1}{H_i}} \frac{T_\eta^{\frac{1}{H_i}}}{u^{\frac{1}{H_i}}}, \quad i=1,2.$$
Recall that $0<H_i<1$ and
$$\frac{T_i}{2}=\left(\left(\frac{T_i}{2}\right)^{H_i}\right)^{\frac{1}{H_i}}\leq\left(\left(\frac{T_1}{2}\right)^{H_1}+\left(\frac{T_2}{2}\right)^{H_2}\right)^{\frac{1}{H_i}}, \quad i=1,2.$$
Then
$$\left(\frac{T_i {T_\eta}^{\frac{1}{H_i}}}{4 K_i u^{\frac{1}{H_i}}}+\frac32\right)\leq  \frac{T_i {T_\eta}^{\frac{1}{H_i}}}{4 K_i u^{\frac{1}{H_i}}}+\left(\left(\frac{T_i}{2}\right)^{H_i}+\left(\frac{T_2}{2}\right)^{H_2}\right)^{\frac{1}{H_1}} \frac{3(T_\eta)^{\frac{1}{H_i}}}{2u^{\frac{1}{H_i}}}$$
$$\leq \left(\left(\frac{T_1}{2}\right)^{H_1}+\left(\frac{T_2}{2}\right)^{H_2}\right)^{\frac{1}{H_i}} \frac{T_\eta^{\frac{1}{H_i}}}{u^{\frac{1}{H_i}}}\left( \frac{1}{2 K_i }+\frac{3}{2}\right).$$
Therefore, we have the following inequality for $Z(p),$ where $Z(p)$ is defined in (\ref{zp}). For each \\$0<\mu<1/Q$ we obtain
$$Z(p)\le\left(\frac{1}{\beta p} \int_0^{\beta p}\left(\left(\left(\frac{T_1}{2}\right)^{H_1}+\left(\frac{T_2}{2}\right)^{H_2}\right)^Q \frac{T_\eta^Q}{u^Q}\frac{N_1 N_2}{2}\right)^{\mu}du\right)^{1/\mu}$$
$$=2 N_1 N_2\left(\left(\frac{T_1}{2}\right)^{H_1}+\left(\frac{T_2}{2}\right)^{H_2}\right)^Q\frac{ T_\eta^Q }{(\beta p)^{1/\mu}}\left(\int_0^{\beta
p}\frac{1}{u^{Q\mu}}\right)^{1/\mu}$$
$$=
\frac{N_1 N_2}{2}\left(\left(\frac{T_1}{2}\right)^{H_1}+\left(\frac{T_2}{2}\right)^{H_2}\right)^Q\frac{T_\eta^Q }{(\beta p)^{Q}}\left(\frac{1}{1-Q \mu}\right)^{1/\mu}.$$  As
$\mu\rightarrow0$, we have
$$Z(p)\le \frac{N_1 N_2}{2}\left(\left(\frac{T_1}{2}\right)^{H_1}+\left(\frac{T_2}{2}\right)^{H_2}\right)^Q\frac{ T_\eta^Q }{(\beta p)^{Q}}e^Q=\frac{N_1 N_2}{2}\left(\frac{e}{p}\right)^Q.$$ Finally, from (\ref{cor_inequal_1}) we obtain
$$I_{\mathbf{T}}(\ve)\leq N_1N_2 \left(\frac{e}{p}\right)^Q
\exp{\left\{-\frac{\ve^2(1-p)}{2\left(T_1^{2H_1}T_2^{2H_2}+\frac{p}{1-p}T_\eta^2\left(\left(\frac{T_1}{2}\right)^{H_1}+\left(\frac{T_2}{2}\right)^{H_2}\right)^2\right)}
\right\}}$$
$$\leq N_1N_2 \left(\frac{e}{p}\right)^Q
\exp{\left\{-\frac{\ve^2(1-p)}{2\left(T_1^{2H_1}T_2^{2H_2}+\frac{p}{1-p} 4^{1-H}T_\eta^4\right)}
\right\}}.$$
\end{proof}
\begin{cor}
\label{cor_pr2_2} Under the conditions of Proposition \ref{prop_2} we have
\begin{equation}
\label{cor_pr2_2_ineq}
\begin{gathered}
\pr\left\{\sup_{\mathbf{t}\in\mathbf{T}}|X(t)|>\ve\right\}\leq N_1N_2\ve^{2 Q}
\exp\left\{Q+\frac{3}{2T_1^{2H_1}T_2^{2H_2}\left(3+ 4^{1-H}\right)}\right\}\times\\
\exp{\left\{-\frac{3\ve^2}{2\left(3T_1^{2H_1}T_2^{2H_2}+ 4^{1-H}T_\eta^4\right)}
\right\}},\quad  \ve>2.
\end{gathered}
\end{equation}
\end{cor}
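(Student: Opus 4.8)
The plan is to mimic the passage from Proposition \ref{prop_1} to Corollary \ref{cor_pr_2}, namely to specialize the free parameter $p$ in \eqref{prop_2_ineq} to an explicit function of $\ve$. As in Corollary \ref{cor_pr_2}, the natural choice is $p=1/\ve^2$, which is admissible because $\ve>2$ forces $0<p<1$. First I would substitute $p=1/\ve^2$ into \eqref{prop_2_ineq}. The prefactor becomes $(e/p)^Q=(e\ve^2)^Q=e^Q\ve^{2Q}$, which already produces the $\ve^{2Q}\exp\{Q\}$ part of the claimed bound. Writing $A=T_1^{2H_1}T_2^{2H_2}$ and $B=4^{1-H}T_\eta^4$ for brevity, and using $\ve^2(1-p)=\ve^2-1$ together with $\frac{p}{1-p}=\frac{1}{\ve^2-1}$, the exponent in \eqref{prop_2_ineq} turns into
$$-\frac{\ve^2(1-p)}{2\left(A+\frac{p}{1-p}B\right)}=-\frac{\ve^2-1}{2\left(A+\frac{B}{\ve^2-1}\right)}.$$

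Next I would exploit $\ve>2$, i.e. $\ve^2-1>3$, to remove the $\ve$-dependence from the denominator: since $\frac{B}{\ve^2-1}\le\frac{B}{3}$, we get $A+\frac{B}{\ve^2-1}\le\frac{3A+B}{3}$, whence
$$-\frac{\ve^2-1}{2\left(A+\frac{B}{\ve^2-1}\right)}\le-\frac{3(\ve^2-1)}{2(3A+B)}=-\frac{3\ve^2}{2(3A+B)}+\frac{3}{2(3A+B)}.$$
The first term reproduces the main decay factor $\exp\{-3\ve^2/(2(3T_1^{2H_1}T_2^{2H_2}+4^{1-H}T_\eta^4))\}$ of \eqref{cor_pr2_2_ineq}, leaving only the harmless correction $\frac{3}{2(3A+B)}$ to be matched with the cleaner term $\frac{3}{2A(3+4^{1-H})}$ that appears in the statement.

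The one genuinely new ingredient is precisely this last matching, and it is where I would spend the only bit of care. I would observe that
$$T_1^{2H_1}T_2^{2H_2}=(T_1^{H_1})^2(T_2^{H_2})^2\le\left(\max\{T_1^{H_1},T_2^{H_2}\}\right)^4=T_\eta^4,$$
so that $B=4^{1-H}T_\eta^4\ge 4^{1-H}A$, and therefore $3A+B\ge A(3+4^{1-H})$. Consequently $\frac{3}{2(3A+B)}\le\frac{3}{2A(3+4^{1-H})}=\frac{3}{2T_1^{2H_1}T_2^{2H_2}(3+4^{1-H})}$. Collecting the three exponential contributions — the prefactor $e^Q$, the main decay, and this correction — then yields exactly \eqref{cor_pr2_2_ineq}. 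I do not expect any real obstacle here: every step is an elementary estimate obtained by plugging $p=1/\ve^2$ into the already-proved Proposition \ref{prop_2}, and the only point worth isolating is the inequality $T_\eta^4\ge T_1^{2H_1}T_2^{2H_2}$, which is what licenses replacing $3A+B$ by $A(3+4^{1-H})$ in the correction term.
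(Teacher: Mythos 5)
Your proposal is correct and is exactly the paper's argument: the paper's proof of Corollary \ref{cor_pr2_2} consists of the single instruction to put $p=1/\ve^2$ in \eqref{prop_2_ineq}, and your computation (including the step $\ve^2-1>3$ and the matching of the correction term via $T_\eta^4\ge T_1^{2H_1}T_2^{2H_2}$, hence $3A+B\ge A(3+4^{1-H})$) simply makes explicit the elementary estimates the paper leaves implicit. No gaps; your handling of the correction term is the one detail the paper glosses over, and you verified it correctly.
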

\begin{proof}
The corollary follows from (\ref{prop_2_ineq}) if we put
$p=1/{\ve^2}.$
\end{proof}
Consider probability distribution of extremes defined in $[0,1]^2$.
\begin{cor}
\label{cor_pr2_3} Let $(\mathbf{T},\rho)=([0,1]^2,\rho_2).$ Under the conditions of Proposition \ref{prop_2} we have
\begin{equation*}
\label{cor_pr2_3_ineq}
\pr\left\{\sup_{\mathbf{t}\in[0,1]^2}|X(t)|>\ve\right\}\leq N_1N_2\ve^{2 Q}
\exp\left\{Q+\frac{3}{2\left(3+ 4^{1-H}\right)}\right\}\exp{\left\{-\frac{3\ve^2}{2\left(3+ 4^{1-H}\right)}
\right\}}, \quad \ve>2.
\end{equation*}
\end{cor}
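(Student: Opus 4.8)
The plan is to obtain this estimate as a direct specialization of Corollary~\ref{cor_pr2_2} to the unit square. First I would observe that $[0,1]^2 = S_{T_1 T_2}$ for the choice $T_1 = T_2 = 1$, and that these values satisfy the standing hypothesis $T_1 \ge 1$, $T_2 \ge 1$ of Proposition~\ref{prop_2}; hence all the conditions needed to invoke Corollary~\ref{cor_pr2_2} hold, and the bound (\ref{cor_pr2_2_ineq}) applies with $T_1 = T_2 = 1$.

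Next I would evaluate at $T_1 = T_2 = 1$ the three $T$-dependent quantities that appear in (\ref{cor_pr2_2_ineq}). One has $T_1^{2H_1} T_2^{2H_2} = 1$, and since $T_\eta = \max\{T_1^{H_1}, T_2^{H_2}\} = \max\{1,1\} = 1$ it follows that $T_\eta^4 = 1$ as well. The constants $N_1$, $N_2$, $Q$ and $H$ depend only on the self-similarity index $\mathbf{H} = (H_1,H_2)$, so they are untouched by the choice of $T_1, T_2$.

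Substituting these values simplifies (\ref{cor_pr2_2_ineq}) term by term: the polynomial prefactor $N_1 N_2 \ve^{2Q}$ is unchanged, the exponent $Q + \frac{3}{2 T_1^{2H_1} T_2^{2H_2}(3 + 4^{1-H})}$ collapses to $Q + \frac{3}{2(3 + 4^{1-H})}$, and the Gaussian exponent $-\frac{3\ve^2}{2(3 T_1^{2H_1} T_2^{2H_2} + 4^{1-H} T_\eta^4)}$ collapses to $-\frac{3\ve^2}{2(3 + 4^{1-H})}$, which is exactly the asserted inequality; the admissible range $\ve > 2$ is inherited verbatim. There is no real obstacle here, since the argument is a plain substitution into an already-proved corollary; the only points requiring care are confirming that $[0,1]^2$ indeed corresponds to $T_1 = T_2 = 1$ in the notation $S_{T_1 T_2}$ and that this forces $T_\eta = 1$.
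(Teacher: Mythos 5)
Your proposal is correct and matches the paper's own proof exactly: the paper also obtains Corollary~\ref{cor_pr2_3} by setting $T_1=T_2=1$ in Corollary~\ref{cor_pr2_2}, so that $T_1^{2H_1}T_2^{2H_2}=1$ and $T_\eta=1$. Your additional checks (that $T_1=T_2=1$ satisfies the hypothesis $T_i\ge 1$ and forces $T_\eta^4=1$) are sound and merely make explicit what the paper leaves implicit.
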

\begin{proof}
In this case $T_1=T_2=1,$ so the corollary follows from (\ref{cor_pr2_2_ineq}).
\end{proof}
We want to find an upper bound for probability distribution of extremes defined in $[1,+\infty)^2$. For this goal we obtain probabilities defined in $[1,2]^2.$
\begin{prop}
\label{prop_3} Let $\mathbf{T}=[1,2]^2,~ \rho=\rho_2$ and
$X=\{X(\mathbf{t}),\mathbf{t}\in \R^2_+\}$ be a centered self-similar Gaussian
random field with stationary rectangular increments. Under the conditions of
Theorem \ref{main_thrm} for all
$0<p<1$ we have
\begin{equation}
\label{prop_3_ineq} I_{[1,2]^2}(\ve)=\pr\left\{\sup_{\mathbf{t} \in
[1,2]^2}{|X(t)|>\ve}\right\}\le N_1 N_2\left(\frac{e}{p}\right)^Q
\exp{\left\{-\frac{\ve^2(1-p)}{2\left(4^{H_1+H_2}+\left(1+2^{|H_1-H_2|}\right)^2\frac{p}{1-p}\right)}\right\}}.
\end{equation}
\end{prop}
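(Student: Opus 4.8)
The plan is to apply Theorem \ref{main_thrm} (via Corollary \ref{cor_main_th}) with the metric $\rho_2$ on the shifted cube $\mathbf{T}=[1,2]^2$, reproducing the structure of the proof of Proposition \ref{prop_2} but recomputing the three geometric quantities $\gamma$, $\beta$, and the covering number $N(u)$ for this particular domain. The key advantage here is that $[1,2]^2$ is translated away from the coordinate axes, so the coefficients $t_1^{H_1}$ and $t_2^{H_2}$ appearing in the bound of Lemma \ref{lemma_2} stay bounded between $1$ and $2^{H_i}$; this is what will let us replace the factor $T_\eta$ by sharper constants. First I would bound the increment variance: by Lemma \ref{lemma_2}, for $\mathbf{t},\mathbf{s}\in[1,2]^2$ one has
$$
\left(\E[X(\mathbf{t})-X(\mathbf{s})]^2\right)^{1/2}\le t_2^{H_2}|t_1-s_1|^{H_1}+t_1^{H_1}|t_2-s_2|^{H_2}\le 2^{\max(H_1,H_2)}\rho_2(\mathbf{t},\mathbf{s}),
$$
so one can take $\sigma(h)=2^{\max(H_1,H_2)}h$, and I expect the eventual appearance of $2^{|H_1-H_2|}$ to come from combining this $2^{\max(H_1,H_2)}$ with a compensating $2^{-\min(H_1,H_2)}$ arising from the diameter computation.

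Next I would compute the two scalar constants in (\ref{defin_beta_gama}). For $\gamma$, the maximal second moment over $[1,2]^2$ is attained at $\mathbf{t}=(2,2)$, giving $\gamma^2=2^{2H_1}2^{2H_2}=4^{H_1+H_2}$, which is exactly the first term in the denominator of (\ref{prop_3_ineq}). For $\beta=\sigma\!\left(\inf_{\mathbf{s}}\sup_{\mathbf{t}}\rho_2(\mathbf{t},\mathbf{s})\right)$, the Chebyshev-type center of $[1,2]^2$ in the $\rho_2$-metric is $\mathbf{s}=(3/2,3/2)$, giving a radius of $2\cdot(1/2)^{\min(H_1,H_2)}$ in each coordinate summed, hence $\beta=2^{\max(H_1,H_2)}\bigl((1/2)^{H_1}+(1/2)^{H_2}\bigr)$; squaring and simplifying should produce the $\bigl(1+2^{|H_1-H_2|}\bigr)^2$ factor multiplying $p/(1-p)$. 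For the covering number, since $[1,2]^2$ is a unit-side square I would invoke Lemma \ref{prop_metric} with $T_1=T_2=1$ (the covering count depends only on the side lengths, not the location), yielding $N(\sigma^{(-1)}(u))$ of the same form as in Proposition \ref{prop_2}; running the identical $\mu\to 0$ limiting argument on $Z(p)$ from (\ref{zp}) then gives the prefactor $\tfrac{N_1N_2}{2}(e/p)^Q$, and the factor of $2$ in the front of (\ref{cor_inequal_1}) absorbs the $1/2$.

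Finally I would assemble these into Corollary \ref{cor_main_th}: substituting $\gamma^2=4^{H_1+H_2}$, the computed $\beta^2$, and $Z(p)=\tfrac{N_1N_2}{2}(e/p)^Q$ into (\ref{cor_inequal_1}) yields (\ref{prop_3_ineq}) directly. The main obstacle I anticipate is the bookkeeping in the $\beta$ computation: one must carefully verify that the $\rho_2$-Chebyshev center is indeed $(3/2,3/2)$ and that the resulting radius, after being fed through $\sigma$ and squared, collapses to the clean form $\bigl(1+2^{|H_1-H_2|}\bigr)^2$ rather than an asymmetric expression in $H_1,H_2$ — this requires tracking the $\max$ and $\min$ exponents consistently and is where the $2^{|H_1-H_2|}$ term is genuinely generated. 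The covering-number and $Z(p)$ steps are essentially a verbatim repeat of Proposition \ref{prop_2} and should pose no real difficulty.
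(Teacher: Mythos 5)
Your proposal is correct and follows essentially the same route as the paper's own proof: you take $\sigma(h)=2^{\max(H_1,H_2)}h$ from Lemma \ref{lemma_2}, compute $\gamma^2=4^{H_1+H_2}$ and $\beta=2^{\max(H_1,H_2)}\bigl(2^{-H_1}+2^{-H_2}\bigr)=1+2^{|H_1-H_2|}$ exactly as the paper does, apply Lemma \ref{prop_metric} to the unit-side square (using translation invariance of $\rho_2$), and repeat the $\mu\to 0$ evaluation of $Z(p)$ from Proposition \ref{prop_2} before assembling everything through Corollary \ref{cor_main_th}. The anticipated simplification to $\bigl(1+2^{|H_1-H_2|}\bigr)^2$ indeed works out as you describe, so there is no gap.
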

\begin{proof}
We prove the proposition in the same way as Proposition \ref{prop_2}. Denote $\eta=\max\{H_1,H_2\}$ and $H=\min\{H_1,H_2\}.$
It is clear that $\sigma(h)=2^\eta h$ and
$$\beta=\sigma\left(\left(\frac{1}{2}\right)^{H_1}+\left(\frac{1}{2}\right)^{H_2}\right)
=2^\eta\left(2^{-H_1}+2^{-H_2}\right)=1+2^{|H_1-H_2|},$$
 $$\gamma^2=4^{H_1+H_2}.$$
From Lemma \ref{prop_metric} we have
$$N(\sigma^{-1}(u))\leq 2 \left(\frac{2^{\eta/H_1}}{4 K_1 u^{1/H_1}}+\frac32\right)\left(\frac{2^{\eta/H_2}}{4 K_2 u^{1/{H_2}}}+\frac32\right).$$

It follows from $\beta>\beta p\ge u>0$ that
$$1\leq \frac{\beta^{1/H_1}}{u^{1/H_1}}=\frac{\left(1+2^{|H_1-H_2|}\right)^{1/H_1}}{2u^{1/H_1}}.$$
Then for $i=1,2$
$$\left(\frac{2^{\eta/H_i}}{4 K_i u^{1/{H_1}}}+\frac32\right)\leq  \frac{2^{\eta/H_i}}{4 K_i u^{1/{H_i}}}+ \frac{3\left(1+2^{|H_1-H_2|}\right)^{1/H_i}}{2u^{1/{H_i}}}\leq  \frac{\left(1+2^{|H_1-H_2|}\right)^{1/{H_i}}}{u^{1/{H_i}}}\left( \frac{1}{2 K_i }+\frac{3}{2}\right).$$
Further, from definition (\ref{zp}) of Z(p) we get the following inequality.
$$Z(p)\le\left(\frac{1}{\beta p} \int_0^{\beta p}\left(\left(1+2^{|H_1-H_2|}\right)^Q \frac{N_1 N_2}{2u^Q}\right)^{\mu}du\right)^{1/\mu}$$
$$=\frac{N_1 N_2}{2}\left(1+2^{|H_1-H_2|}\right)^Q\frac{1}{(\beta p)^{1/\mu}}\left(\int_0^{\beta
p}\frac{1}{u^{Q\mu}}\right)^{1/\mu}=
\frac{N_1 N_2}{2 p^{Q} }\left(\frac{1}{1-Q \mu}\right)^{1/\mu}.$$
As $\mu\rightarrow0$, we have
$$Z(p)\le \frac{N_1 N_2}{2}\left(\frac{e}{p}\right)^Q.$$
Thus, we obtain
$$I_{[1,2]^2}(\ve)\le N_1 N_2\left(\frac{e}{p}\right)^Q
\exp{\left\{-\frac{\ve^2(1-p)}{2\left(4^{H_1+H_2}+\left(1+2^{|H_1-H_2|}\right)^2\frac{p}{1-p}\right)}\right\}}.$$
\end{proof}
As before, denote $\eta=\max\{H_1,H_2\}.$
\begin{cor}
\label{cor_pr3_2} Under the conditions of Proposition \ref{prop_3} for $\ve>2$ we have
\begin{equation}
\label{cor_pr3_2_ineq}
I_{[1,2]^2}(\ve)\le N_1 N_2\exp\{Q+\frac{1}{2\left(4^{H_1+H_2}+1\right)}\}\ve^{2Q}
\exp{\left\{-\frac{3\ve^2}{2 \cdot 4^\eta\left(4^{H}3+4^{1-H}\right)}\right\}}.
\end{equation}
\end{cor}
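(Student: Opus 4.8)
The plan is to follow verbatim the pattern already used for Corollaries~\ref{cor_pr_2} and~\ref{cor_pr2_2}: specialize the free parameter $p$ in the bound (\ref{prop_3_ineq}) of Proposition~\ref{prop_3} to the value $p=1/\ve^2$ and then carry out monotone simplifications. Since $\ve>2$ this choice keeps $p\in(0,1)$, so the substitution is admissible, and the prefactor $N_1N_2$ is carried through untouched.

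First I would treat the polynomial prefactor. With $p=1/\ve^2$ one has $\left(\frac{e}{p}\right)^Q=e^Q\ve^{2Q}$, which accounts for the $\ve^{2Q}$ factor and for the $e^{Q}$ piece of the exponential constant in (\ref{cor_pr3_2_ineq}). Next I would simplify the exponent: substituting $p=1/\ve^2$ gives $\ve^2(1-p)=\ve^2-1$ and $\frac{p}{1-p}=\frac{1}{\ve^2-1}$, so the exponent becomes $-\frac{\ve^2-1}{2\left(4^{H_1+H_2}+\left(1+2^{|H_1-H_2|}\right)^2(\ve^2-1)^{-1}\right)}$. The heart of the argument is to enlarge this denominator, which enlarges the upper bound in the correct direction. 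I would use two elementary facts: since $\ve>2$ we have $\ve^2-1>3$, hence $(\ve^2-1)^{-1}<\tfrac13$; and, writing $\eta=\max\{H_1,H_2\}$ and $H=\min\{H_1,H_2\}$ so that $|H_1-H_2|=\eta-H$ and $2^{\eta-H}\ge1$, the estimate $1+2^{\eta-H}\le 2\cdot 2^{\eta-H}=2^{1+\eta-H}$ squares to $\left(1+2^{|H_1-H_2|}\right)^2\le 4^{\,\eta+1-H}=4^{\eta}4^{1-H}$. Combining these with $4^{H_1+H_2}=4^{\eta}4^{H}$ bounds the denominator above by $\frac{4^{\eta}}{3}\left(3\cdot4^{H}+4^{1-H}\right)$, so the exponent is at most $-\frac{3(\ve^2-1)}{2\cdot4^{\eta}\left(3\cdot4^{H}+4^{1-H}\right)}$.

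Finally I would peel off the constant coming from the $-1$: factoring $\exp\{-\frac{3(\ve^2-1)}{2\cdot4^\eta(3\cdot4^H+4^{1-H})}\}=\exp\{-\frac{3\ve^2}{2\cdot4^\eta(3\cdot4^H+4^{1-H})}\}\exp\{\frac{3}{2\cdot4^\eta(3\cdot4^H+4^{1-H})}\}$ isolates the target exponential rate together with a leftover constant. That leftover is controlled by observing $4^\eta\left(3\cdot4^H+4^{1-H}\right)=3\cdot4^{H_1+H_2}+4^{1+\eta-H}\ge 3(4^{H_1+H_2}+1)$, where the last inequality uses $4^{1+\eta-H}\ge4>3$; hence $\exp\{\frac{3}{2\cdot4^\eta(3\cdot4^H+4^{1-H})}\}\le\exp\{\frac{1}{2(4^{H_1+H_2}+1)}\}$, which merges with $e^{Q}$ to produce the factor $\exp\{Q+\frac{1}{2(4^{H_1+H_2}+1)}\}$ appearing in (\ref{cor_pr3_2_ineq}).

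The derivation is entirely routine substitution followed by monotone estimation, so there is no deep difficulty; the step I expect to demand the most care is the algebraic reshaping of $(1+2^{|H_1-H_2|})^2$ and $4^{H_1+H_2}$ into the single factor $4^{\eta}\left(3\cdot4^{H}+4^{1-H}\right)$, together with the verification that the discarded constant does not exceed $\frac{1}{2(4^{H_1+H_2}+1)}$. Both of these reduce to the single inequality $4^{1+\eta-H}\ge 4$, and once that is in place the remaining bookkeeping matches the computations of Corollaries~\ref{cor_pr_2} and~\ref{cor_pr2_2} line for line.
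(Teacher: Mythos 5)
Your proof is correct and takes the same route as the paper: the paper's own proof of this corollary is just the instruction to set $p=1/\ve^2$ in (\ref{prop_3_ineq}), and your substitution together with the estimates $(\ve^2-1)^{-1}<\tfrac13$, $\left(1+2^{|H_1-H_2|}\right)^2\le 4^{\eta}4^{1-H}$, and $4^\eta\left(3\cdot 4^H+4^{1-H}\right)\ge 3\left(4^{H_1+H_2}+1\right)$ supplies exactly the bookkeeping the paper leaves implicit, in the same pattern it displays explicitly for Corollary \ref{cor_pr_2}. No gaps.
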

\begin{proof}
The corollary follows from (\ref{prop_3_ineq}), if we put
$p=1/{\ve^2}.$
\end{proof}
\begin{theor}
\label{theor_m}
Let $\mathbf{T}=[1,\infty)^2,~ \rho=\rho_2$ and
$X=\{X(\mathbf{t}),\mathbf{t}=(t_1,t_2)\in \R^2_+\}$ be a centered self-similar Gaussian
random field with stationary rectangular increments. Let
$\varphi: (0,+\infty)^2\rightarrow (0,+\infty)$ be an increasing
function in each coordinate. Suppose that for any $D>0$
\begin{equation}
\label{theor_series_1} \sum_{n=0}^\infty\sum_{m=0}^\infty
{\exp{\left\{-D\varphi\left(2^{n},2^{m}\right)\right\}}}<+\infty.
\end{equation}
Denote $$C_1=N_1 N_2\exp\{{Q+\frac{1}{2(4^{H_1+H_2}+1)}}\}~ and~
C_2=\frac{3}{2\cdot4^\eta\left(4^{H}3+4^{1-H}\right)}.$$
If $\ve>\frac{2}{\varphi(\mathbf{1})}$, then
\begin{equation}
\label{theor m_ineq}
Y(\ve):=\pr\left\{\sup_{\mathbf{t}\in[1,+\infty)^2}\frac{|X(\mathbf{t})|}{t_1^{H_1}t_2^{H_2}\varphi(\mathbf{t})}>\ve\right\}
\le
C_1\ve^{2Q}\sum_{n=0}^\infty\sum_{m=0}^\infty\frac{\varphi^{2Q}\left(2^{n},2^{m}\right)}
{\exp{\left\{C_2\ve^2\varphi^2\left(2^{n},2^{m}\right)\right\}}}.
\end{equation}
\end{theor}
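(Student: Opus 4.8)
The plan is to adapt the one-dimensional dyadic argument of Theorem~\ref{theorem_3} to the genuinely two-dimensional situation, using Corollary~\ref{cor_pr3_2} (which already encodes the constants $C_1,C_2$) as the building block on the unit rectangle $[1,2]^2$. First I would cover the quadrant by anisotropic dyadic rectangles
\[
B_{n,m}=[2^n,2^{n+1}]\times[2^m,2^{m+1}],\qquad n,m\ge 0,
\]
so that, since $2^0=1$, we have $[1,+\infty)^2=\bigcup_{n,m\ge 0}B_{n,m}$. A union bound then gives
\[
Y(\ve)\le\sum_{n=0}^\infty\sum_{m=0}^\infty\pr\left\{\sup_{\mathbf{t}\in B_{n,m}}\frac{|X(\mathbf{t})|}{t_1^{H_1}t_2^{H_2}\varphi(\mathbf{t})}>\ve\right\},
\]
and the task reduces to bounding each summand by $C_1\ve^{2Q}\varphi^{2Q}(2^n,2^m)\exp\{-C_2\ve^2\varphi^2(2^n,2^m)\}$.

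On $B_{n,m}$ one has $t_1\ge 2^n$ and $t_2\ge 2^m$, hence $t_1^{H_1}\ge 2^{nH_1}$ and $t_2^{H_2}\ge 2^{mH_2}$; since $\varphi$ is increasing in each coordinate, also $\varphi(\mathbf{t})\ge\varphi(2^n,2^m)$. Shrinking the denominator to these constants only enlarges the ratio, so the $(n,m)$ summand is at most
\[
\pr\left\{\sup_{\mathbf{t}\in B_{n,m}}\frac{|X(\mathbf{t})|}{2^{nH_1}2^{mH_2}}>\ve\,\varphi(2^n,2^m)\right\}.
\]
Next I would apply self-similarity with the anisotropic scaling $a_1=2^n,\ a_2=2^m$: by Definition~\ref{def_taq} the field $\{2^{-nH_1}2^{-mH_2}X(2^n s_1,2^m s_2)\}$ has the same finite-dimensional distributions as $\{X(\mathbf{s})\}$, and the substitution $\mathbf{t}=(2^ns_1,2^ms_2)$ maps $[1,2]^2$ bijectively onto $B_{n,m}$. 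By separability the two suprema therefore coincide in law,
\[
\sup_{\mathbf{t}\in B_{n,m}}\frac{|X(\mathbf{t})|}{2^{nH_1}2^{mH_2}}\stackrel{d}{=}\sup_{\mathbf{s}\in[1,2]^2}|X(\mathbf{s})|,
\]
so the $(n,m)$ summand equals $I_{[1,2]^2}(\ve\,\varphi(2^n,2^m))$.

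To finish I would invoke Corollary~\ref{cor_pr3_2}. Because $\varphi$ is increasing and $n,m\ge0$, we have $\varphi(2^n,2^m)\ge\varphi(\mathbf{1})$, and the hypothesis $\ve>2/\varphi(\mathbf{1})$ forces $\ve\,\varphi(2^n,2^m)>2$; thus the corollary applies with threshold $\ve\,\varphi(2^n,2^m)$ and yields precisely $C_1(\ve\varphi(2^n,2^m))^{2Q}\exp\{-C_2(\ve\varphi(2^n,2^m))^2\}$. Substituting back and summing over $n,m\ge0$ gives inequality~(\ref{theor m_ineq}).

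The argument is largely bookkeeping once the cover is chosen; the single step demanding genuine care is the anisotropic self-similarity reduction, where the two coordinates are dilated by different factors $2^n$ and $2^m$ while the field is normalized by the \emph{single} product $2^{nH_1}2^{mH_2}$ — one must check that this is exactly the factor produced by Definition~\ref{def_taq}, so that every block collapses to the fixed unit rectangle $[1,2]^2$ and Corollary~\ref{cor_pr3_2} may be used term by term with a uniform threshold condition. The inequality itself holds regardless of whether its right-hand side is finite; the role of hypothesis~(\ref{theor_series_1}) is to guarantee finiteness, since for large $\varphi$ the polynomial factor $\varphi^{2Q}$ is dominated by $\exp\{\tfrac12 C_2\ve^2\varphi^2\}$ and $\varphi^2\ge\varphi$, so each tail term is majorized by $\exp\{-\tfrac12 C_2\ve^2\varphi(2^n,2^m)\}$ and (\ref{theor_series_1}) with $D=\tfrac12 C_2\ve^2$ closes the estimate.
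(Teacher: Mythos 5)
Your proposal is correct and takes essentially the same route as the paper's own proof: the same dyadic cover of $[1,+\infty)^2$ (merely indexed by $n,m\ge 0$ instead of $n,m\ge 1$), the same use of monotonicity of $\varphi$ together with anisotropic self-similarity to collapse each block onto $[1,2]^2$, and the same term-by-term application of Corollary~\ref{cor_pr3_2} at level $\ve\,\varphi(2^n,2^m)>2$ guaranteed by $\ve>2/\varphi(\mathbf{1})$. Your closing observation that the inequality holds regardless of finiteness and that hypothesis~(\ref{theor_series_1}) serves only to make the right-hand side finite is accurate and slightly more explicit than the paper.
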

\begin{proof}
At first, we have the following obvious inequality
$$\pr\left\{\sup_{\mathbf{t}\in[1,+\infty)^2}\frac{|X(\mathbf{t})|}{t_1^{H_1}t_2^{H_2}\varphi(\mathbf{t})}>\ve\right\}\le\sum_{n=1}^\infty\sum_{m=1}^\infty\mathop{\pr\left\{\sup_{ \substack{t_1\in[2^{n-1},2^n]  \\
t_2\in[2^{m-1},2^m]
}}\frac{|X(\mathbf{t})|}{t_1^{H_1}t_2^{H_2}\varphi(\mathbf{t})}>\ve\right\}}.$$
Then from monotonicity of $\varphi$ we get for all $n,m>1:$
$$\mathop{\pr\left\{\sup_{ \substack{t_1\in[2^{n-1},2^n]  \\
t_2\in[2^{m-1},2^m]
}}\frac{|X(\mathbf{t})|}{t_1^{H_1}t_2^{H_2}\varphi(\mathbf{t})}>\ve\right\}}\leq \mathop{\pr\left\{\sup_{ \substack{t_1\in[2^{n-1},2^n]  \\
t_2\in[2^{m-1},2^m]
}}\frac{2^{(1-n)H_1}2^{(1-m)H_2}|X(\mathbf{t})|}{\varphi(2^{n-1},2^{m-1})}>\ve\right\}}.$$
By self-similarity, we obtain the following equality for all $n,m\geq 1:$
$$ \mathop{\pr\left\{\sup_{ \substack{t_1\in[2^{n-1},2^n]  \\
t_2\in[2^{m-1},2^m]
}}\frac{2^{(1-n)H_1}2^{(1-m)H_2}|X(\mathbf{t})|}{\varphi(2^{n-1},2^{m-1})}>\ve\right\}}=\mathop{\pr\left\{\sup_{\mathbf{t}\in [1,2]^2}\frac{|X(\mathbf{t})|}{\varphi(2^{n-1},2^{m-1})}>\ve\right\}}.$$
Thus,
$$Y(\ve)\leq \sum_{n=1}^\infty\sum_{m=1}^\infty\mathop{\pr\left\{\sup_{\mathbf{t}\in [1,2]^2}\frac{|X(\mathbf{t})|}{\varphi(2^{n-1},2^{m-1})}>\ve\right\}}$$
$$=\sum_{n=1}^\infty\sum_{m=1}^\infty\pr\left\{\sup_{\mathbf{t}\in[1,2]^2}
|X(\mathbf{t})|>\ve\varphi\left(2^{n-1},2^{m-1}\right)\right\}.$$
It follows from Corollary \ref{cor_pr3_2} that for $\ve>\frac{2}{\varphi(\mathbf{1})}$ we have
$$Y(\ve)\le
C_1\ve^{2Q}\sum_{n=0}^\infty\sum_{m=0}^\infty\varphi^{2Q}\left(2^{n},2^{m}\right)
\exp{\left\{-C_2\ve^2\varphi^2\left(2^{n},2^{m}\right)\right\}}.$$
This completes the proof.
\end{proof}
\begin{cor}
\label{main_cor_2}
If for any $\mathbf{H}\in (0,1)^2$ the series $$\sum_{n=0}^\infty\sum_{m=0}^\infty\frac{\varphi^{2Q}\left(2^{n},2^{m}\right)}{
\exp{\left\{2\frac{\varphi^2\left(2^{n},2^{m}\right)}{\varphi^2\left(\mathbf{1}\right)}\right\}}}<+\infty,$$ then for $\ve>\frac{2}{\varphi(\mathbf{1})}\sqrt{\frac{2}{4^\eta 3}\left(4^H 3+4^{1-H}\right)},$
\begin{equation}
\label{main_cor_2_ineq}
\begin{gathered}
Y(\ve)\leq C_1 \ve^{2Q}\exp\left\{-\frac{\ve^2}{2}\frac{\varphi^2(\mathbf{1})}{C_2}\right\} \sum_{n=0}^\infty\sum_{m=0}^\infty\frac{\varphi^{2Q}\left(2^{n},2^{m}\right)}{
\exp{\left\{2\frac{\varphi^2\left(2^{n},2^{m}\right)}{\varphi^2\left(\mathbf{1}\right)}\right\}}}.
\end{gathered}
\end{equation}
\end{cor}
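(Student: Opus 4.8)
The plan is to treat this corollary as a term-by-term repackaging of the bound produced by Theorem~\ref{theor_m}, in exact parallel with the proof of Corollary~\ref{main_cor_1}. Abbreviate $w_{n,m}:=\varphi(2^{n},2^{m})$. Theorem~\ref{theor_m} already supplies, for $\ve>2/\varphi(\mathbf{1})$,
\[
Y(\ve)\le C_1\ve^{2Q}\sum_{n=0}^{\infty}\sum_{m=0}^{\infty}\frac{w_{n,m}^{2Q}}{\exp\{C_2\ve^{2}w_{n,m}^{2}\}},
\]
so the only task is to extract an $\ve$-dependent decay factor and leave behind a convergent series. The single structural property of $\varphi$ I will use is its coordinatewise monotonicity: since $2^{n},2^{m}\ge1$, it gives $w_{n,m}\ge\varphi(\mathbf{1})$ for every $n,m\ge0$.

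Next I would split the exponent of each summand into a product of two factors, each at least $2$. Put
\[
a:=\frac{C_2\varphi^{2}(\mathbf{1})}{2}\,\ve^{2},\qquad b_{n,m}:=\frac{2\,w_{n,m}^{2}}{\varphi^{2}(\mathbf{1})},
\]
so that $a\,b_{n,m}=C_2\ve^{2}w_{n,m}^{2}$ reproduces the exponent above. The bound $w_{n,m}\ge\varphi(\mathbf{1})$ gives $b_{n,m}\ge2$ immediately, while the admissible range imposed on $\ve$ is precisely the solution of $a\ge2$, i.e. $\ve^{2}\ge 4/\bigl(C_2\varphi^{2}(\mathbf{1})\bigr)$, once the explicit value $C_2=\tfrac{3}{2\cdot4^{\eta}(4^{H}3+4^{1-H})}$ from Corollary~\ref{cor_pr3_2} is inserted.

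The decisive elementary step is the inequality $x+y\le xy$ valid for all $x,y\ge2$ (equivalently $(x-1)(y-1)\ge1$), applied to $x=a$ and $y=b_{n,m}$. It yields $\exp\{-a\,b_{n,m}\}\le e^{-a}\,e^{-b_{n,m}}$, hence
\[
\frac{w_{n,m}^{2Q}}{\exp\{C_2\ve^{2}w_{n,m}^{2}\}}\le e^{-a}\,\frac{w_{n,m}^{2Q}}{\exp\{2w_{n,m}^{2}/\varphi^{2}(\mathbf{1})\}}.
\]
Summing over $n,m$ and pulling out the common factor $e^{-a}=\exp\{-\tfrac12 C_2\varphi^{2}(\mathbf{1})\ve^{2}\}$ leaves exactly the series $\sum_{n,m}w_{n,m}^{2Q}/\exp\{2w_{n,m}^{2}/\varphi^{2}(\mathbf{1})\}$, which is finite by hypothesis; reinserting the prefactor $C_1\ve^{2Q}$ then gives (\ref{main_cor_2_ineq}).

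I expect the only real obstacle to be arithmetic bookkeeping rather than any conceptual difficulty: one must verify that after substituting the explicit $C_2$ the condition $a\ge2$ collapses to the stated lower bound on $\ve$, and check that the $\ve$-independent numerators $w_{n,m}^{2Q}$ together with the factors $C_1,\ve^{2Q}$ pass through the term-by-term estimate unchanged. No new probabilistic input is needed; the corollary is purely the conversion, via $x+y\le xy$, of the fast Gaussian factor $\exp\{-C_2\ve^{2}w_{n,m}^{2}\}$ into a product of an $\ve$-decay factor and a summable tail.
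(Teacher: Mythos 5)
Your proposal is correct and essentially reproduces the paper's own proof: the paper sets $u=\tfrac{C_2}{2}\varphi^2(\mathbf{1})$ and $v_{n,m}=2\varphi^2(2^n,2^m)/\varphi^2(\mathbf{1})$ (your $a=u\ve^2$, $b_{n,m}=v_{n,m}$) and applies the same elementary inequality $x+y\le xy$ for $x,y\ge 2$ to split $\exp\{-C_2\ve^2\varphi^2(2^n,2^m)\}$ into the factor $e^{-u\ve^2}$ times a summable tail. Two caveats on your ``bookkeeping'' step, both traceable to misprints in the paper rather than to your argument: your (correct) decay factor $\exp\{-\tfrac{C_2}{2}\varphi^2(\mathbf{1})\ve^2\}$ shows that the printed $\exp\{-\tfrac{\ve^2}{2}\varphi^2(\mathbf{1})/C_2\}$ in (\ref{main_cor_2_ineq}) should read $\exp\{-\tfrac{\ve^2}{2}C_2\varphi^2(\mathbf{1})\}$ (as Example~2, with $\varphi(\mathbf{1})=1$ and factor $\exp\{-\tfrac{C_2}{2}\ve^2\}$, confirms), and the condition $a\ge2$ does not ``collapse to the stated lower bound'' but to $\ve\ge\frac{2}{\varphi(\mathbf{1})}\sqrt{\tfrac{2\cdot 4^{\eta}}{3}\left(4^{H}3+4^{1-H}\right)}$, i.e.\ the $4^{\eta}$ in the stated threshold belongs in the numerator, not the denominator.
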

\begin{proof}
Denote
$$u=\frac{3}{4\cdot4^\eta\left(4^{H}3+4^{1-H}\right)}\varphi^2(\mathbf{1})\quad  \text{ and } \quad v_{n,m}=2\frac{\varphi^2(2^n,2^m)}{\varphi^2(\mathbf{1})},n,m\geq 0.$$

It can easily be checked that $u\ve^2>2$ and $v_{n,m}>2,n,m\geq 0.$
Recall that for $u\ve^2,v_{n,m}>2$ we have $u\ve^2+v_{n,m}\leq u\ve^2 v_{n,m}.$
It follows from (\ref{theor m_ineq}) that for $\ve>\frac{2}{\varphi(\mathbf{1})}\sqrt{\frac{2}{4^\eta 3}\left(4^H 3+4^{1-H}\right)}>\frac{2}{\varphi(\mathbf{1})}$ we have
$$
Y(\ve)\leq C_1 \ve^{2Q}\exp\left\{\frac{3\ve^2}{4\cdot4^\eta\left(4^{H}3+4^{1-H}\right)}\varphi^2(\mathbf{1})\right\} \sum_{n=0}^\infty\sum_{m=0}^\infty\frac{\varphi^{2Q}\left(2^{n},2^{m}\right)}{
\exp{\left\{2\frac{\varphi^2\left(2^{n},2^{m}\right)}{\varphi^2\left(\mathbf{1}\right)}\right\}}}.
$$
The corollary is proved.
\end{proof}
We present the example of applying Corollary \ref{main_cor_2}.

\textbf{ Example 2.} Let  $\varphi_1,\varphi_2$  be the positive functions of  $R^2_+$ to $R$ such that
$$\varphi_1(\mathbf{x})=\sqrt{(2+\delta)}\sqrt{\ln(\log_2{(x_1x_2)}+e)}, \mathbf{x}=(x_1,x_2)\in \R^2_+$$
and
$$\varphi_2(\mathbf{x})=\sqrt{(2+\delta)}\sqrt{\ln (e+\log_2x_1)+\ln (e+\log_2x_1)}, \mathbf{x}=(x_1,x_2)\in \R^2_+.$$
Then
$$\varphi_1(2^n,2^m)=\sqrt{\ln(n+m+e)}, n,m\in \{0\}\cup \mathbb{N},$$
$$\varphi_2(2^n,2^m)=\sqrt{\ln (n+e)+\ln(m+e)},n,m\in \{0\}\cup \mathbb{N},$$
and $\varphi_1(\mathbf{1})=\varphi_2(\mathbf{1})=1.$

Therefore, from (\ref{main_cor_2_ineq}) we get
$$\sum_{n=0}^\infty\sum_{m=0}^\infty\frac{\varphi_1^{2Q}\left(2^{n},2^{m}\right)}{
\exp{\left\{2\frac{\varphi_1^2\left(2^{n},2^{m}\right)}{\varphi_1^2\left(\mathbf{1}\right)}\right\}}}=\sum_{n=0}^\infty\sum_{m=0}^\infty\frac{\left(\ln(n+m+e)\right)^{Q}}{
(n+m+e)^2},$$
$$\sum_{n=0}^\infty\sum_{m=0}^\infty\frac{\varphi_2^{2Q}\left(2^{n},2^{m}\right)}{
\exp{\left\{2\frac{\varphi_2^2\left(2^{n},2^{m}\right)}{\varphi_2^2\left(\mathbf{1}\right)}\right\}}}=\sum_{n=0}^\infty\sum_{m=0}^\infty\frac{\left(\ln(n+e)(m+e)\right)^{Q}}{(n+e)^2(m+e)^2}.$$

Hence, from Corollary \ref{main_cor_2} we have
$$\pr\left\{\sup_{\mathbf{t}\in[1,+\infty)^2}\frac{|X(\mathbf{t})|}{t_1^{H_1}t_2^{H_2}\varphi_1(\mathbf{t})}>\ve\right\}
\le C_1 \ve^{2Q}\exp\left\{-\frac{C_2}{2}\ve^2\right\} \sum_{n=0}^\infty\sum_{m=0}^\infty\frac{\left(\ln(n+m+e)\right)^{Q}}{
(n+m+e)^2},$$
$$\pr\left\{\sup_{\mathbf{t}\in[1,+\infty)^2}\frac{|X(\mathbf{t})|}{t_1^{H_1}t_2^{H_2}\varphi_2(\mathbf{t})}>\ve\right\}
\le C_1 \ve^{2Q}\exp\left\{-\frac{C_2}{2}\ve^2\right\}  \sum_{n=0}^\infty\sum_{m=0}^\infty\frac{\left(\ln(n+e)(m+e)\right)^{Q}}{(n+e)^2(m+e)^2}.$$

Thus, we obtain probability distributions for extremes of a normalized self-similar Gaussian random field with stationary rectangular increments that defined in $[1,+\infty)^2$.

\end{document}